\documentclass[12pt]{amsart}
\usepackage{amsmath,amsfonts,amssymb,graphics,mathrsfs,amsthm,eurosym,verbatim,enumerate,quotes,graphicx}
\usepackage[marginratio=1:1,tmargin=117pt, height=650pt]{geometry}
\usepackage[usenames, dvipsnames]{color}

\usepackage[flushmargin]{footmisc}
\usepackage[noadjust]{cite}
\numberwithin{equation}{section}
\makeatletter
\def\blfootnote{\xdef\@thefnmark{}\@footnotetext}
\makeatother
\theoremstyle{plain}
\newtheorem{theorem}{Theorem}[section]
\newtheorem{proposition}[theorem]{Proposition}
\newtheorem{cor}[theorem]{Corollary}

\newtheorem{lemma}[theorem]{Lemma}
\newtheorem{definition}[theorem]{Definition}

\newtheorem{claim}[theorem]{Claim}

\newtheorem*{remark}{Remark}
\newtheorem*{remarks}{Remarks}

\newcommand{\C}{{\mathbb{C}}}

\newcommand{\B}{\mathcal B}

\newcommand{\N}{{\mathbb{N}}}

\newcommand{\s}{\underline{s}}
\newcommand{\tauinf}{\overset{\infty}{\tau}}

\begin{document}
\title[Singularities of inner functions]{Singularities of inner functions associated with hyperbolic maps}
\author{Vasiliki Evdoridou, \, N\'uria Fagella, \, Xavier Jarque, \, David J. Sixsmith}
\address{School of Mathematics and Statistics\\ The Open University \\ Walton Hall \\ Milton Keynes MK7 6AA \\ UK}
\email{vasiliki.evdoridou@open.ac.uk}
\address{Departament de Matem\'atiques i Inform\'atica \\ Institut de Matem\'atiques de la Universitat de Barcelona (IMUB) and Barcelona Graduate School of Mathematics (BGSMath) \\  Gran Via 585 \\ 08007 Barcelona \\ Catalonia}
\email{nfagella@ub.edu}
\address{Departament de Matem\'atiques i Inform\'atica \\ Institut de Matem\'atiques de la Universitat de Barcelona (IMUB) and Barcelona Graduate School of Mathematics (BGSMath) \\  Gran Via 585 \\ 08007 Barcelona \\ Catalonia}
\email{xavier.jarque@ub.edu}
\address{Dept. of Mathematical Sciences \\ University of Liverpool \\ Liverpool L69 7ZL \\ UK \\ ORCiD: 0000-0002-3543-6969} 
\email{djs@liverpool.ac.uk}

\begin{abstract}
%
Let $f$ be a function in the Eremenko-Lyubich class $\B$, and let $U$ be an unbounded, forward invariant Fatou component of $f$. We relate the number of singularities of an inner function associated to $f|_U$ with the number of tracts of $f$. In particular, we show that if $f$ lies in either of two large classes of functions in $\B$, and also has finitely many tracts, then the number of singularities of an associated inner function is at most equal to the number of tracts of $f$. 

Our results imply that for hyperbolic functions of finite order there is an upper bound -- related to the order -- on the number of singularities of an associated inner function.
\end{abstract}
\maketitle
\section{Introduction}
Let $f$ be a transcendental entire function. We denote by $f^n$ the $n$th iterate of $f$. The set of points for which the iterates $\{f^n\}_{n \in \mathbb{N}}$ form a normal family in some neighbourhood is called the \textit{Fatou set} $F(f)$. Its complement in the complex plane is the \textit{Julia set} $J(f)$. The Fatou set is open and consists of connected components which are called \textit{Fatou components}. For an introduction to these sets and their properties see \cite{bergweiler93}.

Let $U \subset \mathbb{C}$ be an unbounded \emph{forward invariant} Fatou component; in other words, $f(U) \subset U$. Note that it follows from \cite[Theorem 3.1]{Baker} that $U$ is simply connected. Let also $\phi : \mathbb{D} \to U$ be a Riemann map. Then the function $h: \mathbb{D} \to \mathbb{D}$ defined by $h := \phi^{-1} \circ f \circ \phi$ is an \emph{inner function associated to} $f \lvert _{U}$. Note that $h$ is unique up to a conformal conjugacy.

A point $\zeta \in \partial \mathbb{D}$ is a \textit{singularity} of $h$ if $h$ cannot be extended holomorphically to any neighbourhood of $\zeta$ in $\mathbb{C}.$ This definition is independent of the choice of $\phi$, up to a M\"{o}bius map (see Section \ref{section:prel}). In this paper we are interested in the number of singularities of an associated inner function $h$. In particular, we give two conditions on $f$ and $U$ which ensure that the number of singularities of $h$ is \emph{finite}. This is far from being the case for inner functions in general, for which, \emph{a priori}, every point of $\partial \mathbb{D}$ can be a singularity; this follows, for example, from \cite[Theorem II.6.2]{Garnett}. 

Inner functions have been widely studied in relation to the iteration of transcendental entire functions. Devaney and Goldberg \cite{DevandG} considered the Julia set of $f(z) := \lambda e^z$ in the case that $f$ has a completely invariant attracting basin $U$. Their study used an inner function associated to $f|_U$, for which they found a specific formula. Inner functions were also used as a tool by Bara\'nski in \cite{Baranski}, and Bara\'nski and Karpi\'nska in \cite{BK}, when studying the Julia set of disjoint type functions. Several further results about inner functions associated to transcendental entire functions with an invariant Fatou component were obtained in \cite{BakerDominguez}, \cite{univalentbd}, \cite{Bargmann}, and \cite{fagella-henriksen}.

Our results extend this study, and so are interesting in themselves; indeed, we show that in many cases the associated inner function is very well-behaved on the boundary, having only finitely many singularities. Our results are also pertinent to the results of \cite{Accesses}. In that paper the authors obtained several results on the existence and the number of accesses to boundary points from simply connected Fatou components of certain \emph{meromorphic} functions. In order to obtain some of their results (see, for example, \cite[Theorems B and C]{Accesses}) they introduced a hypothesis on the meromorphic function $f$, which they termed being \emph{singularly nice}. (We define this property in Section \ref{section:prel}). This property, which is not easy to check, is closely related to the singularities of the associated inner function $h$, and is satisfied when all the singularities of $h$ are isolated. Hence all entire functions that satisfy the hypotheses of our statements are also singularly nice.

Our results apply in the very well-studied class of transcendental entire functions known as the Eremenko-Lyubich class, denoted by $\B$. To define this class, for a transcendental entire function $f$, we denote by $S(f)$ the set of \emph{singular values} of $f$. This is the closure of the set of critical and finite asymptotic values of $f$. The class $\B$ consists of those transcendental entire functions $f$ such that $S(f)$ is bounded. A survey of dynamics in the class $\mathcal{B}$ was given in \cite{DaveclassB}. 

If $f \in \B$, then we can choose a Jordan domain $D$ containing all the singular values. The components of $f^{-1}(\C\setminus \overline{D})$ are Jordan domains whose boundary passes through infinity, called \emph{tracts}. In our results we relate the number of singularities of $h$ to the number of tracts of $f$. Our first such result is as follows.

\begin{theorem}
\label{theo:finitelymanytracts}
Suppose that $f \in \B$ has finitely many tracts, and that $U$ is a forward invariant Fatou component of $f$. Suppose also that there is a Jordan domain $D$ such that $S(f) \subset D \subset F(f)$. Then the number of singularities of an associated inner function is equal to the number of tracts of $f$.
\end{theorem}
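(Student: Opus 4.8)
The plan is to reduce the statement to a count of the \emph{accesses to $\infty$} from $U$, and then to match those accesses with the tracts. Fix a Riemann map $\phi\colon\mathbb{D}\to U$, write $h=\phi^{-1}\circ f\circ\phi$, and let $T_1,\dots,T_\tau$ denote the tracts, $\tau$ their number. I would first record two consequences of the hypotheses. Since $f(J(f))\subseteq J(f)$ while $S(f)\subset F(f)$, no critical point of $f$ lies on $J(f)$, in particular none on $\partial U$, so $f$ is a local homeomorphism at every finite point of $\partial U$. And since an Eremenko--Lyubich function has no Baker domains (its escaping set lies in $J(f)$, see \cite{DaveclassB}), the unbounded, forward invariant, simply connected (by \cite[Theorem 3.1]{Baker}) component $U$ is an immediate attracting or parabolic basin; such a basin contains a singular value, so — after slightly shrinking $D$ so that $\overline{D}\subset F(f)$ — the component of $F(f)$ containing $D$ is $U$, whence $\overline{D}\subset U$, each restriction $f\colon T_i\to\C\setminus\overline{D}$ is a covering of the once-punctured topological disc $\C\setminus\overline{D}$ (so every tract has a single end at $\infty$), and $f$ maps the complementary region $\C\setminus\bigcup_i T_i=f^{-1}(\overline{D})$ into $\overline{D}\subset U$.

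The first main step is a \emph{dictionary}: $\zeta_0\in\partial\mathbb{D}$ is \emph{not} a singularity of $h$ if and only if some boundary arc about $\zeta_0$ consists only of prime ends of $U$ with finite impression; hence the number of singularities of $h$ equals the number of accesses to $\infty$ from $U$. For the easy direction, along such an arc the map $f$ is a local homeomorphism at the (finite) boundary points involved and thus transports the — possibly very irregular — prime-end structure of $\overline U$ onto itself, so that near $\zeta_0$ the composition $h=\phi^{-1}\circ f\circ\phi$ is a homeomorphism of a half-disc neighbourhood carrying the boundary arc into $\partial\mathbb{D}$; the Schwarz reflection principle then yields the holomorphic extension. (The conformality of $f$ between the two boundary points cancels the irregularities of $\phi$ and $\phi^{-1}$, which individually need not extend.) For the other direction one must show that a prime end whose impression contains $\infty$ genuinely obstructs continuation: pushing a fundamental chain of its crosscuts forward by $f$, which has an essential singularity at $\infty$, produces boundary oscillation of $h$ at $\zeta_0$ incompatible with any holomorphic extension.

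The second main step counts the accesses to $\infty$ from $U$. \emph{At most $\tau$}: a curve in $U$ tending to $\infty$ cannot have an escaping tail (as $I(f)\subset J(f)$ and $U\subset F(f)$) and $f$ sends $f^{-1}(\overline{D})$ into the bounded set $\overline{D}$; combined with the structure of the tracts as covers of $\C\setminus\overline{D}$, this forces the accesses to be finite in number, each associated with a single tract and distinct accesses with distinct tracts, so there are at most $\tau$. \emph{At least $\tau$}: one shows that $U$ meets every tract $T_i$ and that each yields an access; since $U$ is unbounded while $D$ is bounded, $U\cap(\C\setminus\overline{D})\neq\emptyset$, and lifting a curve in $U\cap(\C\setminus\overline{D})$ through the covering $f\colon T_i\to\C\setminus\overline{D}$ — showing, via forward invariance of $U$, that this lift can be taken within $U$ — gives a curve in $U$ running out to the unique end of $T_i$, hence an access associated with $T_i$, and for distinct $i$ these accesses are distinct. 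Combining the two steps with the dictionary, the number of singularities of $h$ equals $\tau$.

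I expect the crux to be twofold. In the dictionary, the hard half is showing that a prime end over $\infty$ really does prevent continuation: this requires quantitative control of the distortion of $\phi$ along a chain of crosscuts escaping to $\infty$, together with an argument that conjugating by $\phi$ does not tame the essential singularity of $f$ at $\infty$. In the count, the delicate point is the lower bound — that \emph{every} tract is visited by $U$, equivalently that the lift above can be kept inside $U$ — which is precisely where forward invariance of $U$, the placement $\overline{D}\subset F(f)$, and the finiteness of the tract system have to be combined; one must also check that each tract contributes exactly one access, and handle the parabolic case — in which the postsingular set need not be compactly contained in $U$ — with extra care in the prime-end and access bookkeeping.
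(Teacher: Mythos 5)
Your central reduction --- that the number of singularities of $h$ equals the number of accesses to $\infty$ from $U$ --- is false, and this breaks both halves of the proposal. Consider any disjoint-type function with a single tract, e.g.\ $\lambda e^z$ with $|\lambda|$ small. Here $\tau = 1$ and the theorem gives exactly one singularity, but the boundary of $U = F(f)$ is a Cantor bouquet and $U$ has many accesses to $\infty$ (one on each side of each hair). Indeed the paper's own proof flags this explicitly: after constructing the $n$ accesses carried by the tract half-boundaries, it notes ``We are not claiming here that there might not be other accesses to infinity from $U$.'' So your second step, the count ``at most $\tau$ accesses,'' cannot succeed either; the true inequality is only that the \emph{singularities} number at most $\tau$, not the accesses.

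What the paper uses instead is the characterisation of $\operatorname{Sing}(h)$ via preimages (Lemma~\ref{lem:sing-preimages}): for a.e.\ $w \in \mathbb{D}$, $\operatorname{Sing}(h)$ is the set of accumulation points of $h^{-1}(w)$ on $\partial\mathbb{D}$. Transporting this through $\phi$, a singularity is not just any access to $\infty$ but an access along which preimages of a generic $\zeta \in U$ accumulate. This is the idea your proposal is missing. With $\zeta$ chosen on $\partial D$, \emph{all} preimages of $\zeta$ lie on $\partial\mathcal{T}$, and --- this is where $\overline{D} \subset F(f)$ is used, via Theorem~\ref{theo:comps} --- those tract boundaries lie entirely inside $U$. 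The $2n$ half-boundaries thus give curves in $U$ to $\infty$ containing every preimage of $\zeta$. Finally, $J(f) \subset \overline{\mathcal{T}}$ forces adjacent half-boundaries (consecutive in the cyclic order at $\infty$, belonging to different tracts) into the \emph{same} access by Lemma~\ref{lem:accesses-tracts}, so the preimages accumulate along exactly $n$ points of $\partial\mathbb{D}$, and Lemma~\ref{lem:sing-preimages} finishes the argument.

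A secondary concern: even setting aside the counterexample above, the ``dictionary'' argument as written is not sound. The easy direction claims that finite impression of nearby prime ends plus $f$ being a local homeomorphism at finite boundary points makes $h$ a homeomorphism of a half-disc onto a half-disc, so Schwarz reflection applies. But $\phi$ and $\phi^{-1}$ need not extend to the boundary at all in general, and ``the conformality of $f$ cancels the irregularities of $\phi$ and $\phi^{-1}$'' is an assertion rather than an argument; the paper avoids this difficulty entirely by working with preimages of a generic interior point rather than with boundary extension. You also do not need the prime-end machinery: the relevant tool is the correspondence between accesses and radial limits $= \infty$ of $\phi$ (Theorem~\ref{correspondence theorem}), combined with Lemma~\ref{lem:sing-preimages}.
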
 

\begin{remark}\normalfont
With the hypotheses of Theorem~\ref{theo:finitelymanytracts}, $U$ is unbounded, and is either an attracting or a parabolic basin; see Theorem~\ref{theo:comps} below.
\end{remark}

As a corollary of Theorem~\ref{theo:finitelymanytracts} we give a broad class of functions for which it is easy to check that the hypotheses of that theorem are satisfied. First we recall that the order of a transcendental entire function $f$ is defined by
\[
\rho (f) := \limsup_{r \to \infty} \frac{\log \log M(r,f)}{\log r},
\]
where 
\[
M(r,f) := \max_{\lvert z \rvert =r} \lvert f(z) \rvert, \quad\text{for } r>0.
\]

The following theorem is an immediate consequence of the celebrated Denjoy-Carleman-Ahlfors theorem (see, for example, \cite{nevanlinna}), and gives an upper bound to the number of tracts for a finite-order function in $\B$. (Note that if $f \in \mathcal{B}$, then $\rho(f) \geq 1/2$; see \cite[Lemma 3.5]{rs-dimensions}).

\begin{theorem}
\label{D-C-A}
Let $f \in \B$ be of order $\rho(f) < \infty$. Then $f$ has at most $2\rho(f)$ tracts.
\end{theorem}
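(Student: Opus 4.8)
The plan is to deduce Theorem~\ref{D-C-A} directly from the Denjoy--Carleman--Ahlfors theorem, so the proof will be very short; essentially the only work is in matching up the objects correctly.

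First I would recall the relevant form of the Denjoy--Carleman--Ahlfors theorem (see, e.g., \cite{nevanlinna}): a transcendental entire function $g$ of finite order $\rho(g)$ has at most $\max\{1,2\rho(g)\}$ direct transcendental singularities of $g^{-1}$; in particular it has at most that many logarithmic singularities over $\infty$. Equivalently, for all sufficiently large $R$ the open set $\{z : |g(z)| > R\}$ has at most $\max\{1,2\rho(g)\}$ components. Note that, a priori, an entire function may have infinitely many tracts, so part of the content of the theorem is that this number is finite.

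Next I would identify the tracts of $f$ with logarithmic singularities of $f^{-1}$ over $\infty$. Since $S(f)$ is bounded, in the definition of a tract we may take the Jordan domain $D$ to be a disc $\{|w|<R\}$ with $R$ large; the number of tracts does not depend on this choice. Then the tracts of $f$ are exactly the components of $f^{-1}(\{|w|>R\}) = \{z : |f(z)| > R\}$, and since $D \supset S(f)$ the restriction of $f$ to each tract is a covering of $\{|w|>R\}$. As is standard for functions in $\B$, each tract is simply connected and this covering is universal, so each tract is a logarithmic tract over $\infty$, and distinct tracts give distinct such singularities.

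Combining these two facts, the number of tracts of $f$ is at most $\max\{1,2\rho(f)\}$. Since $f\in\B$ implies $\rho(f)\geq 1/2$ (cited above from \cite[Lemma 3.5]{rs-dimensions}), we have $2\rho(f)\geq 1$, so the bound is simply $2\rho(f)$, which completes the proof. The only point requiring any care is the identification in the previous paragraph --- specifically that each tract really is a \emph{logarithmic} (hence direct) singularity over $\infty$, and not some other type --- but this is a well-known structural feature of the class $\B$; granting it, the statement is genuinely immediate, so I do not anticipate a serious obstacle.
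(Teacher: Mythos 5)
Your proposal is correct, and it follows exactly the route the paper intends: the paper gives no proof of Theorem~\ref{D-C-A}, merely labelling it an immediate consequence of the Denjoy--Carleman--Ahlfors theorem with a pointer to \cite{nevanlinna}, and your argument supplies precisely the details of that reduction (identifying tracts with direct/logarithmic singularities of $f^{-1}$ over $\infty$, and using $\rho(f)\geq 1/2$ to remove the $\max\{1,\cdot\}$).
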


A function $f \in \B$ is said to be of \emph{disjoint type} if $D$ can be chosen so that $f(\overline{D})~\subset~D$. It is known that for a disjoint-type function, the Fatou set is connected and unbounded \cite[Proposition 2.8]{HMB}. The following is an immediate consequence of Theorem~\ref{theo:finitelymanytracts}, together with this observation and Theorem \ref{D-C-A}.
\begin{cor}
\label{disjoint-type}
Suppose that $f$ is a disjoint-type function of order $\rho(f) < \infty$, and that $U$ is the Fatou component of $f$. Then the number of singularities of an associated inner function is at most equal to $2\rho(f)$.
\end{cor}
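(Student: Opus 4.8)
The plan is simply to verify that the hypotheses of Theorem~\ref{theo:finitelymanytracts} hold under the stated assumptions, and then read off the conclusion using Theorem~\ref{D-C-A}. First, I would recall the definition of disjoint type: we may choose the Jordan domain $D$ containing $S(f)$ so that $f(\overline{D}) \subset D$. In particular $\overline{D}$ is a compact subset of the immediate attracting basin of a fixed point lying in $D$ (the map $f|_{\overline D}$ is a contraction onto a relatively compact subset, so it has an attracting fixed point whose basin contains $\overline D$), and hence $D \subset F(f)$. This gives exactly the inclusion $S(f) \subset D \subset F(f)$ required by Theorem~\ref{theo:finitelymanytracts}. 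Since $f$ has finite order $\rho(f)$, Theorem~\ref{D-C-A} tells us $f$ has at most $2\rho(f)$ tracts, so in particular $f$ has finitely many tracts, which is the remaining hypothesis of Theorem~\ref{theo:finitelymanytracts}.

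Next I would address the Fatou component. By \cite[Proposition 2.8]{HMB} the Fatou set of a disjoint-type function is connected and unbounded; call this single component $U$. Since $D \subset F(f)$ and $F(f)$ is connected, $D \subset U$; and since $f(U) \subset F(f) = U$, the component $U$ is forward invariant. Thus $U$ is an unbounded forward invariant Fatou component, and all the hypotheses of Theorem~\ref{theo:finitelymanytracts} are satisfied.

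Finally, applying Theorem~\ref{theo:finitelymanytracts}, the number of singularities of an associated inner function equals the number of tracts of $f$, which by Theorem~\ref{D-C-A} is at most $2\rho(f)$. This completes the proof.

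I do not expect any genuine obstacle here: the corollary is a straightforward concatenation of Theorem~\ref{theo:finitelymanytracts}, Theorem~\ref{D-C-A}, and the cited structural fact about disjoint-type Fatou sets. The only point requiring a small argument is the verification that $D \subset F(f)$ for a disjoint-type function, i.e. that the invariant domain $D$ lies in the Fatou set; this follows from the normality of $\{f^n\}$ on $\overline{D}$, which is immediate from $f(\overline D) \subset D$ together with $\overline D$ being a forward invariant compact set on which all iterates are defined and bounded.
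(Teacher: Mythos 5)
Your proof is correct and takes essentially the same route as the paper, which treats the corollary as an immediate consequence of Theorem~\ref{theo:finitelymanytracts}, Theorem~\ref{D-C-A}, and \cite[Proposition 2.8]{HMB}; you simply spell out the short verification that $S(f) \subset D \subset F(f)$ (the normality argument on $\overline{D}$ is the right one and is all that is needed) which the paper leaves implicit.
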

\begin{remark}\normalfont
Suppose that $f \in \B$. Then, for sufficiently large values of $M>0$, the function $g(z) := f(z/M)$ is of disjoint type \cite[p.261]{Rigidity}. It follows that any finite order function in the class $\B$ is equivalent to a function which satisfies the conditions of Corollary~\ref{disjoint-type}.
\end{remark}
Our second main result concerns a rather larger and particularly well-studied class of functions in the class $\mathcal{B}$.
\begin{definition}
A transcendental entire function is called \emph{hyperbolic} if the \emph{postsingular set} defined by 
\[
\mathcal{P}(f) := \overline{\bigcup_{j \geq 0} f^j(S(f))},
\]
is a compact subset of the Fatou set.
\end{definition}
\begin{remark}\normalfont
It is known that all hyperbolic functions lie in the class $\B$; see \cite[Theorem and Definition 1.3]{classBornotclassB}. In addition, a function $f$ is of disjoint type if it is hyperbolic and $F(f)$ is connected \cite[Proposition 2.8]{HMB}).
\end{remark}

Our result concerning hyperbolic functions is as follows.

\begin{theorem}
\label{theo:hyperbolic}
Suppose that $f$ is hyperbolic. Suppose also that $U$ is an unbounded forward invariant Fatou component of $f$. Then the number of singularities of an associated inner function is at most equal to the number of tracts of $f$.
\end{theorem}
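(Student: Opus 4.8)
The plan is to reduce Theorem~\ref{theo:hyperbolic} to the situation already handled by Theorem~\ref{theo:finitelymanytracts}, by exploiting the extra freedom that hyperbolicity gives us in choosing the Jordan domain $D$. First I would recall that, since $f$ is hyperbolic, the postsingular set $\mathcal P(f)$ is a compact subset of $F(f)$; in particular $S(f) \subset \mathcal P(f)$ is compact and contained in the Fatou set. The natural idea is to try to find a Jordan domain $D$ with $S(f) \subset D \subset F(f)$, which would let us invoke Theorem~\ref{theo:finitelymanytracts} directly — except that $f$ need not have finitely many tracts. So the argument has to be local: we only need control of the tracts and of $U$, not of all of $F(f)$.

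The key steps, in order, would be as follows. First, use compactness of $\mathcal P(f)$ together with the fact that $\mathcal P(f) \subset F(f)$ to produce an open neighbourhood $V$ of $\mathcal P(f)$ with $\overline V$ a compact subset of $F(f)$, and then enlarge slightly to arrange that $V$ is a finite union of Jordan domains (or, by a standard connectedness-and-compactness argument, can be taken to be a single Jordan domain $D$ with $S(f) \subset D \subset F(f)$ and, moreover, $f(\mathcal P(f)) \subset \mathcal P(f) \subset D$). Second, with such a $D$ fixed, examine the tracts, i.e.\ the components of $f^{-1}(\C \setminus \overline D)$: for each singularity $\zeta$ of the associated inner function $h = \phi^{-1}\circ f \circ \phi$, I would show — exactly as in the proof of Theorem~\ref{theo:finitelymanytracts} — that $\zeta$ corresponds, via the radial/prime-end behaviour of $\phi$ at $\zeta$, to an unbounded piece of $\partial U$ along which $f$ escapes towards a logarithmic singularity over infinity, hence to a tract $T$ of $f$ that meets $U$. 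Third, establish the reverse bookkeeping is \emph{at most} one singularity per relevant tract: if two singularities $\zeta_1, \zeta_2$ of $h$ gave rise to the same tract $T$, one derives a contradiction with the way $\phi$ maps a neighbourhood in $\mathbb D$ of each $\zeta_i$ into $U$ and with the univalence of the logarithmic change of coordinates on $T$. Summing over tracts gives that the number of singularities of $h$ is at most the number of tracts of $f$.

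The step I expect to be the main obstacle is the second one: relating a singularity $\zeta$ of $h$ on $\partial\mathbb D$ to a specific tract of $f$ when $f$ may have infinitely many tracts and $U$ may only be an attracting or parabolic basin rather than something with a completely invariant structure. In the finite-tract case of Theorem~\ref{theo:finitelymanytracts} one can use global arguments (for instance, that $f$ restricted to $\C \setminus f^{-1}(\overline D)$ is a covering over $\C\setminus\overline D$ with finitely many sheets-worth of tracts); here I would need a purely local replacement, controlling the behaviour of $f$ near $\partial U$ and using that, away from $\overline D \supset \mathcal P(f)$, the map $f$ has no singular values, so all the relevant branches of $f^{-1}$ are well-defined. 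Concretely, I would work in logarithmic coordinates on each tract and use a normal-families or hyperbolic-metric expansion argument (as is standard in the class $\B$, cf.\ the survey \cite{DaveclassB}) to show that the non-tangential limit of $h$ at $\zeta$ forces the image under $\phi$ of a Stolz angle at $\zeta$ to eventually enter, and stay in, a single tract. Once that correspondence $\zeta \mapsto T$ is pinned down and shown to be injective, the counting inequality follows, and the remaining parts of the proof are essentially the same as for Theorem~\ref{theo:finitelymanytracts}, with $D$ chosen using hyperbolicity rather than assumed in the hypothesis.
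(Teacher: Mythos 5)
Your plan has a critical gap right at the first step, and the reduction to Theorem~\ref{theo:finitelymanytracts} cannot go through. You propose to use hyperbolicity to produce a Jordan domain $D$ with $S(f)\subset D\subset F(f)$ (``by a standard connectedness-and-compactness argument''). But this is simply false for general hyperbolic $f$. Hyperbolicity gives that $\mathcal P(f)$ is a compact subset of $F(f)$, nothing more; $F(f)$ can have several components and $\mathcal P(f)$ can be spread across them (think of a period-$n$ attracting cycle, $n\ge 2$). Any connected Jordan domain $D\subset F(f)$ lies in a single Fatou component, so it cannot contain such a $\mathcal P(f)$, nor even $S(f)$ in general. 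Indeed, the paper's Theorem~\ref{theo:comps} shows that the hypothesis $S(f)\subset D\subset F(f)$ \emph{forces} $F(f)$ to be connected --- so it is equivalent to $f$ being disjoint type (or having a single parabolic basin), which is a strictly smaller class than hyperbolic. This is exactly why Theorem~\ref{theo:hyperbolic} is stated separately and has ``at most'' rather than ``equal''.

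The consequence of this is that, in the hyperbolic-but-not-disjoint-type case, the $D$ one works with must meet $J(f)$, and hence the boundaries of the tracts also meet $J(f)$ and do not lie in $U$. That destroys the mechanism of the proof of Theorem~\ref{theo:finitelymanytracts}, which crucially used the half-boundaries $\gamma_{i,k}\subset U$ as the curves joining the preimages $f^{-1}(\zeta)$ to $\infty$. The paper's actual argument is therefore entirely different: it chooses $D\supset\mathcal P(f)$ \emph{without} requiring $D\subset F(f)$, develops uniform bounds in the hyperbolic metric of $\Omega=\C\setminus\mathcal P(f)$ on ``good pieces'' of the Julia set crossing the half-boundaries (Propositions~\ref{prop:exp} and \ref{prop:hyp-julia}, Corollary~\ref{cor:hyp-julia}, using fundamental tails and the expansion estimate of Benini--Rempe-Gillen), and then constructs, by ``weaving around'' those pieces, replacement curves $\tilde\gamma\subset U$ from a finite point to $\infty$ containing all but finitely many points of $U_\zeta$ on each good half-boundary (Lemma~\ref{lemm:mainclaim}). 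None of this appears in, or is replaceable by, your sketch. Your step three also gets the combinatorics slightly wrong: the correspondence is not one singularity per tract via injectivity of the logarithmic change of variable; rather, each tract has two half-boundaries, and it is \emph{adjacent} half-boundaries of \emph{different} tracts that are shown to lie in the same access to infinity, giving $n$ accesses from $2n$ half-boundaries. You correctly anticipated that pinning down the relationship between a singularity and a tract is the main obstacle, but the tools you propose (Stolz angles, logarithmic coordinates, local branches of $f^{-1}$) do not engage with the actual difficulty, which is topological: building unbounded curves inside $U$ that dodge the Julia-set continua crossing the tract boundaries.
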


The next corollary is an immediate consequence of Theorem \ref{theo:hyperbolic}, once again using Theorem \ref{D-C-A}.

\begin{cor}
\label{cor:hyp}
Suppose that $f$ is hyperbolic, and of order $\rho(f) < \infty$, and that $U$ is an unbounded forward invariant Fatou component of $f$. Then the number of singularities of an associated inner function is at most equal to $2\rho(f)$.
\end{cor}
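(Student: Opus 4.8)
The corollary follows by combining two results already at our disposal: Theorem~\ref{theo:hyperbolic} and Theorem~\ref{D-C-A}. The plan is simply to chain these together, so the argument is short.

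First I would observe that since $f$ is hyperbolic, it lies in the class $\B$ (by \cite[Theorem and Definition 1.3]{classBornotclassB}, as noted in the remark following the definition of hyperbolicity). Hence Theorem~\ref{D-C-A} applies to $f$: because $\rho(f) < \infty$, the function $f$ has at most $2\rho(f)$ tracts. Next I would invoke Theorem~\ref{theo:hyperbolic}, whose hypotheses are exactly that $f$ is hyperbolic and that $U$ is an unbounded forward invariant Fatou component of $f$ — precisely the standing assumptions of the corollary. That theorem gives that the number of singularities of an associated inner function $h$ is at most the number of tracts of $f$.

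Combining the two inequalities, the number of singularities of $h$ is at most the number of tracts of $f$, which in turn is at most $2\rho(f)$. This completes the argument.

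There is no real obstacle here: the corollary is a formal consequence of the two preceding theorems, and the only thing to check is that the hyperbolicity hypothesis legitimises the use of Theorem~\ref{D-C-A}, which requires membership in $\B$ — and this is guaranteed by \cite{classBornotclassB}. One could add the minor remark that the bound is vacuous unless $\rho(f) \geq 1/2$, but in fact every function in $\B$ satisfies $\rho(f) \geq 1/2$ by \cite[Lemma 3.5]{rs-dimensions}, so the bound is always a genuine finite number.
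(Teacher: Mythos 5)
Your proof is correct and follows exactly the route the paper takes: Corollary~\ref{cor:hyp} is stated there as an immediate consequence of Theorem~\ref{theo:hyperbolic} combined with Theorem~\ref{D-C-A}, using that hyperbolic functions lie in $\B$. Nothing further is needed.
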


\begin{remarks}\normalfont
\mbox{ }
\begin{enumerate}
\item In general, if $U$ is bounded, then an associated inner function can be shown to be a finite Blaschke product, and therefore to have no singularities on $\partial \mathbb{D}$.
\item An example of a function that satisfies the hypotheses of Theorem \ref{theo:finitelymanytracts} but not those of Theorem~\ref{theo:hyperbolic} is the function $f(z)= e^z-1$. It is easy to show that $F(f)$ consists of a single completely invariant parabolic basin which contains the only finite singular value; i.e. the asymptotic value $-1$. However, $P(f) \cap J(f) = \{0\}$, and so $f$ is not hyperbolic.
\item In fact we could prove our results for a larger class of functions, that includes the hyperbolic functions. These are the so-called \emph{strongly subhyperbolic functions} introduced in \cite{HMB}. For clarity of exposition, we restrict to the class of hyperbolic functions, as this is a more well-known class.
\end{enumerate}
\end{remarks}

We close by showing, among other things, that the hypotheses of Theorem \ref{theo:finitelymanytracts} imply that the Fatou set is connected. In particular, we can then deduce that the forward invariant Fatou component in the statement of that result is unbounded, which is why there appears to be an additional condition on $U$ in the statement of Theorem~\ref{theo:hyperbolic}. We say that a Fatou component $U$ is \emph{completely invariant} if $z \in U$ if and only if $f(z) \in U$. 
\begin{theorem}
\label{theo:comps}
Let $f \in \B$, let $U$ be a forward invariant Fatou component of $f$, and let $D$ be a Jordan domain such that $S(f) \subset D \subset F(f)$. Then $D \subset U = F(f)$, $U$ is completely invariant and so unbounded, and $U$ is either an attracting or a parabolic basin. If $U$ is an attracting basin, then $f$ is disjoint-type.
\end{theorem}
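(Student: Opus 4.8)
The plan is to prove first that $D\subset U$, then that $F(f)$ is connected, and then to read off the remaining assertions from the classification of invariant Fatou components. At the outset one may assume $\overline{D}\subset F(f)$: since $S(f)$ is compact and $F(f)$ is open, $D$ may be replaced by a Jordan domain $D_0$ with $S(f)\subset D_0\subset\overline{D_0}\subset D$, and once $F(f)=U$ has been established the inclusion $D\subset U$ for the original $D$ follows from $D\subset F(f)$. So assume $\overline{D}\subset F(f)$ and let $V$ be the Fatou component containing the connected compact set $\overline{D}$. By the Eremenko--Lyubich expansion estimate (see \cite{DaveclassB}), a function in $\B$ has no Fatou component all of whose forward iterates are disjoint from $\overline{D}$. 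Hence, if some Fatou component $W$ had an orbit that never meets $V$, then every iterate of $W$ would be disjoint from $\overline{D}$ --- any iterate meeting $\overline{D}$ would, since $\overline{D}\subset V$, be contained in $V$ --- which is impossible; so every Fatou component is eventually mapped into $V$. Applying this to the forward invariant component $U$ forces $U=V$, so $\overline{D}\subset U$ and in particular $S(f)\subset U$.

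The heart of the proof is to show $F(f)=U$. For this I would use that $f^{-1}(D)$ is connected, which is a consequence of $S(f)\subset D$ --- the $f$-preimage of a simply connected domain carrying all the singular values is connected --- and which I expect to be available from the preliminary material. Suppose $F(f)\neq U$. Since every Fatou component eventually enters $U$, taking the orbit of some other Fatou component and stopping at the last component before it first lands in $U$ produces a Fatou component $W\neq U$ with $f(W)\subset U$. The image of a Fatou component is, up to at most one point, again a Fatou component; hence $f(W)$ and $f(U)$ each contain all but at most one point of $U$, so in particular each meets $D$, and therefore $f^{-1}(D)$ meets both $W$ and $U$. But $f^{-1}(D)\subset F(f)$ is connected and so lies in a single Fatou component, a contradiction. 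Thus $F(f)=U$, and since the Fatou set is completely invariant, so is $U$.

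The remaining conclusions now follow quickly. As $f^{-1}(U)=U$ and $U$ is open, $U$ contains a complex value that $f$ assumes infinitely often (by Picard's theorem all but at most one value is so assumed), and its preimage set is infinite, hence unbounded, and contained in $f^{-1}(U)=U$; so $U$ is unbounded. Being an invariant Fatou component, $U$ is an attracting basin, a parabolic basin, a Siegel disc, or a Baker domain. It is not a Baker domain, since $f\in\B$ forces $I(f)\subset J(f)$ while a Baker domain is contained in $I(f)$. It is not a Siegel disc, for then $f|_U$ would be a conformal automorphism of $U$, and, combined with $f^{-1}(U)=U$, this would give every value in $U$ exactly one $f$-preimage, contradicting Picard's theorem. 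Hence $U$ is an attracting or a parabolic basin. Finally, if $U$ is the immediate basin of an attracting fixed point $p$ then $f^{n}\to p$ locally uniformly on $U$, and since $S(f)$ is a compact subset of $U$ the postsingular set $\mathcal{P}(f)=\overline{\bigcup_{n\ge 0}f^{n}(S(f))}$ is a compact subset of $U\subset F(f)$; so $f$ is hyperbolic, and, $F(f)=U$ being connected, $f$ is of disjoint type by \cite[Proposition~2.8]{HMB}.

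The one genuinely delicate step is the second one, and within it its two inputs: that $D\subset U$ (which rests on the Eremenko--Lyubich theory of the class $\B$) and that $f^{-1}(D)$ is connected; everything else is a routine combination of the classification of periodic Fatou components with Picard's theorem. Note that in the parabolic case $f$ need not be hyperbolic --- the parabolic fixed point lies in $\mathcal{P}(f)\cap J(f)$ --- which is consistent with the example $f(z)=e^{z}-1$ mentioned above.
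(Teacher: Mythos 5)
Your overall architecture is close to the paper's: the preimage-connectivity argument for complete invariance is essentially the paper's Proposition 3.1 (which quotes \cite[Theorem 1]{Herring} and \cite[Proposition 2.9]{BFR}), the Picard argument for unboundedness is the same, and the classification of $U$ and the final hyperbolicity/disjoint-type deduction match. However, your opening step -- establishing $D\subset U$ -- rests on the assertion that, by ``the Eremenko--Lyubich expansion estimate'', a function in $\B$ has no Fatou component whose entire forward orbit avoids $\overline{D}$, and this is a genuine gap. That assertion is not what the expansion estimate gives. The expansion estimate (and the resulting fact $I(f)\subset J(f)$) controls orbits that stay at \emph{large modulus}; it does not, for an arbitrary Jordan domain $D\supset S(f)$, say anything about orbits that remain in $\mathbb{C}\setminus\overline{D}$ but do not tend to $\infty$. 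In particular it does not rule out a wandering domain $V$ with bounded orbit avoiding $\overline{D}$ and with $\omega(V)$ a single point of $J(f)$ (e.g.\ a parabolic fixed point). Nor does it, by itself, rule out a Siegel cycle disjoint from $\overline{D}$; for that one needs the fact that the boundary of a rotation domain lies in $P(f)$, together with information about where $P(f)$ can accumulate. These are precisely the cases the paper handles separately: first it uses the classification of periodic Fatou components and the fact that attracting/parabolic cycles meet $S(f)$ while Siegel boundaries lie in $P(f)$, and then it invokes the result of \cite{limfunctionswd} that wandering-domain limit functions lie in $P'(f)\cup\{\infty\}$ -- combined with the petal argument in the parabolic case -- to exclude wandering domains.

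So the difficulty is not in the latter two-thirds of your write-up, which is fine and even gives a slightly different (and correct) way of excluding the Siegel case via $f^{-1}(U)=U$ and Picard. The problem is that your step-one claim secretly contains most of the work of the theorem. To repair it you would need, as the paper does, to (i) determine via the classification of periodic Fatou components that the unique cycle containing $S(f)$ is attracting or parabolic, so that $P(f)\subset\overline{U}$, and (ii) show separately that there are no wandering domains, using the constraint $\omega(V)\subset P'(f)\cup\{\infty\}$ and the structure of $P(f)$ from (i). Citing the expansion estimate alone does not discharge these obligations.
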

%
%
\subsection*{Structure} In Section \ref{section 2} we discuss various preliminary topics, such as singularities of inner functions and accesses to boundary points. In Section~\ref{sec:fatou} we prove Theorem~\ref{theo:comps}, and then in Section \ref{sec:disjoint} we prove Theorem \ref{theo:finitelymanytracts}. Finally, in Section~\ref{sec:hyp} we prove Theorem~\ref{theo:hyperbolic}.

\subsection*{Notation} We let $\hat{\C} := \C \cup \{\infty\}$. \\

\emph{Acknowledgments:} 
We would like to thank Chris Bishop for comments which motivated this work. We would also like to thank Krzysztof Bara\'{n}ski, Anna Miriam Benini, Walter Bergweiler, Chris Bishop, Lasse Rempe-Gillen, Phil Rippon and Gwyneth Stallard for helpful discussions. The first author would like to thank the LMS and the IMUB for supporting her stay at Universitat de Barcelona, where this work started. We are grateful to Anne Sixsmith for making cake.
%
%
%
%
%
\section{Preliminaries}
\label{section 2}
\subsection{Accesses to infinity}
\label{section:prel}
Let $U$ be an unbounded, simply connected domain. Following \cite{Accesses} we define an access to infinity from $U$. In the case that $U$ is a forward invariant Fatou component, accesses to points on $\partial U$ from $U$ are a particularly interesting subject of study. In this paper we are only interested in accesses to infinity from $U$ because these are the only accesses that are related to the singularities of a related inner function, $h$; see the remarks at the end of Subsection~\ref{subs:inner}. 
\begin{definition}
Infinity is \emph{accessible} from $U$, if there
exists a curve $\gamma : [0, 1] \to \hat{\C}$ such that $\gamma ([0, 1)) \subset U$ and $\gamma (1) = \infty$. We also say that $\gamma$ \emph{lands} at $\infty$.
Fix a point $z_0 \in U$ and suppose $\infty$ is accessible from $U$. A homotopy class (with fixed endpoints)
of curves $\gamma : [0, 1] \to \hat{\C}$ such that $\gamma([0, 1)) \subset U$, $\gamma (0) = z_0, \gamma (1) = \infty$ is called an \emph{access to infinity} from U.
\end{definition}

We will need the following result about curves belonging to the same access to infinity from $U$, which is \cite[Lemma 4.1(a)]{Accesses}. 
\begin{lemma}
\label{lem:accesses-tracts}
 Let $U \subset \C$ be an unbounded simply connected domain and $z_0 \in U$. Let
$\gamma_0, \gamma_1 : [0, 1] \to \hat{\C}$ be curves such that $\gamma_j ([0, 1)) \subset U, \gamma_j (0) = z_0, \gamma_j (1) = \infty$ for $j = 0, 1$. Then $\gamma_0$ and $\gamma_1$ are in the same access to infinity if and only if there is exactly one
component of $\hat{\C} \setminus (\gamma_0 \cup \gamma_1)$ intersecting $\partial U$.
\end{lemma}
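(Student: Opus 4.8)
The plan is to transport everything to the unit disc via a Riemann map $\varphi\colon\mathbb{D}\to U$ — which exists precisely because $U\neq\C$, the only interesting case — and to reduce both the notion of ``same access'' and the condition on complementary components to the boundary behaviour of $\varphi$ at one or two points of $\partial\mathbb{D}$. As a first step, for $j=0,1$ set $\widetilde\gamma_j:=\varphi^{-1}\circ\gamma_j|_{[0,1)}$, a curve in $\mathbb{D}$ with $\widetilde\gamma_j(0)=p:=\varphi^{-1}(z_0)$. Since $\gamma_j(t)\to\infty\notin U$ as $t\to1$, the curve $\widetilde\gamma_j$ eventually leaves every compact subset of $\mathbb{D}$; since moreover $\gamma_j$ converges in $\hat{\C}$ to the single point $\infty$, Lindel\"of's theorem on asymptotic values shows that $\widetilde\gamma_j(t)$ converges to a well-defined point $\zeta_j\in\partial\mathbb{D}$ as $t\to1$, at which $\varphi$ has radial limit $\infty$.

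The key intermediate claim is that $\gamma_0$ and $\gamma_1$ are in the same access to infinity if and only if $\zeta_0=\zeta_1$. Because $\mathbb{D}$ is simply connected, $\widetilde\gamma_j$ is homotopic — fixing $p$, and keeping the curve in $\mathbb{D}$ except at its terminal point $\zeta_j$ — to any fixed reference arc from $p$ to $\zeta_j$; pushing such a homotopy forward by $\varphi$, and using the radial limit $\infty$ at $\zeta_j$ to handle the terminal point, shows that $\gamma_j$ lies in the access determined by $\zeta_j$. In particular, $\zeta_0=\zeta_1$ forces $\gamma_0$ and $\gamma_1$ into the same access. Conversely, suppose $H$ realises an equality of accesses, so $H$ is continuous on $[0,1]^2$ with $H([0,1]\times[0,1))\subset U$ and $H([0,1]\times\{1\})=\{\infty\}$; by compactness, $H(s,t)\to\infty$ uniformly in $s$ as $t\to1$. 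Writing $c_{\varepsilon}(s):=\varphi^{-1}(H(s,1-\varepsilon))$, each $c_{\varepsilon}$ is a curve in $\mathbb{D}$ joining $\widetilde\gamma_0(1-\varepsilon)$ to $\widetilde\gamma_1(1-\varepsilon)$ and contained in $\varphi^{-1}(U\cap\{|w|>R_{\varepsilon}\})$ with $R_{\varepsilon}\to\infty$. Since the set of $\zeta\in\partial\mathbb{D}$ for which $\infty$ lies in the cluster set of $\varphi$ is totally disconnected — here one uses that $\varphi$ is univalent, so its radial limits are finite and pairwise distinct off a set of measure zero — a clopen-separation argument applied to the connected sets $c_{\varepsilon}([0,1])$, together with the convergences $c_{\varepsilon}(0)\to\zeta_0$ and $c_{\varepsilon}(1)\to\zeta_1$, forces $\zeta_0=\zeta_1$.

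It remains to count, in terms of $\zeta_0$ and $\zeta_1$, the components of $\hat{\C}\setminus\Gamma$ meeting $\partial U$, where $\Gamma:=\gamma_0\cup\gamma_1$. Since $\gamma_j([0,1))\subset U$ we have $\Gamma\subset\overline U$ (closure in $\hat{\C}$), so $\hat{\C}\setminus\overline U$ is a union of components of $\hat{\C}\setminus\Gamma$, none of which meets $\partial U$; consequently a component of $\hat{\C}\setminus\Gamma$ meets $\partial U$ if and only if it also meets $U$, and such components arise as $\varphi$-images of components of $\mathbb{D}\setminus(\widetilde\gamma_0\cup\widetilde\gamma_1)$ whose closure meets $\partial\mathbb{D}$. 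By the first part of the previous step we may replace each $\widetilde\gamma_j$ by a homotopic simple arc, which changes neither the access nor the component count. If $\zeta_0\neq\zeta_1$, then $\widetilde\gamma_0\cup\widetilde\gamma_1$ is a crosscut of $\mathbb{D}$, which divides $\mathbb{D}$ into exactly two Jordan subdomains, each touching one of the two arcs into which $\{\zeta_0,\zeta_1\}$ splits $\partial\mathbb{D}$; their $\varphi$-images lie in two distinct components of $\hat{\C}\setminus\Gamma$, both meeting $\partial U$. If $\zeta_0=\zeta_1=:\zeta$, then $\widetilde\gamma_0\cup\widetilde\gamma_1\cup\{\zeta\}$ is a compact connected subset of $\overline{\mathbb{D}}$ meeting $\partial\mathbb{D}$ only at $\zeta$, so $\partial\mathbb{D}\setminus\{\zeta\}$ lies in the closure of a single component of $\mathbb{D}\setminus(\widetilde\gamma_0\cup\widetilde\gamma_1)$, while every other component has $\varphi$-image contained in a component of $\hat{\C}\setminus\Gamma$ lying inside $U$; hence exactly one component of $\hat{\C}\setminus\Gamma$ meets $\partial U$. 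Combining these facts with the previous step yields the lemma.

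I expect the main obstacle to be the boundary bookkeeping. In the converse direction above one needs the set of $\zeta$ whose cluster set contains $\infty$ to be totally disconnected, which rests on the boundary theory of univalent functions and is genuinely delicate when $\partial U$ is wild near $\infty$; and in the component count one must verify that $\varphi$ neither splits a single complementary component of $\widetilde\gamma_0\cup\widetilde\gamma_1$ into several components of $\hat{\C}\setminus\Gamma$ nor glues two of them together through $\partial U$ — in particular, in the case $\zeta_0=\zeta_1$, that a component whose closure meets $\partial\mathbb{D}$ only at $\zeta$ does not have $\varphi$-image lying in the same component of $\hat{\C}\setminus\Gamma$ as the ``outer'' piece. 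Both points are cleanest through the theory of prime ends (cluster sets and principal points), and throughout one must keep track of the triple role of $\infty$: it lies on $\Gamma$, it lies on $\partial U$, and it is the common landing point exactly when $\zeta_0=\zeta_1$. Reducing first to simple-arc representatives within each access is what makes the crosscut/Jordan-curve dichotomy available and is, I expect, the cleanest way through the topology.
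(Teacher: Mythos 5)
Note first that the paper does not prove this lemma at all: it is quoted directly from \cite[Lemma 4.1(a)]{Accesses}, so there is no internal proof to compare with, and your proposal has to stand on its own. It does not, because of a genuine gap at the heart of the direction ``same access $\Rightarrow$ $\zeta_0=\zeta_1$''. There you take the connecting curves $c_\varepsilon$ and invoke the claim that the set $\Xi$ of points $\zeta\in\partial\mathbb{D}$ whose cluster set $C(\varphi,\zeta)$ contains $\infty$ is totally disconnected, justified by the remark that radial limits of the univalent map $\varphi$ are finite and pairwise distinct off a set of measure zero. That justification is a non sequitur: measure (or capacity) statements about radial limits control only the set where the \emph{radial limit} is $\infty$, whereas $\Xi$ is the set where $\infty$ merely lies in the cluster set, i.e.\ in the prime end impression, and a point of $\partial\mathbb{D}$ can have a finite radial limit while its impression contains $\infty$. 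Worse, the claim itself is false in general: $\Xi$ is closed (cluster sets are upper semicontinuous), and if $\partial U\cup\{\infty\}$ is an indecomposable continuum --- such simply connected unbounded domains exist, e.g.\ the complement of a Knaster-type continuum placed so as to pass through $\infty$, and nothing in the lemma excludes such $U$ --- then by Rutt's theorem a residual set of prime ends has impression equal to the whole boundary, so the closed set $\Xi$ is all of $\partial\mathbb{D}$. Your Hausdorff-limit construction does produce a continuum inside $\Xi$ joining $\zeta_0$ to $\zeta_1$, but nothing then forces that continuum to degenerate, so the clopen-separation step collapses. Note that this direction is precisely the nontrivial half of the correspondence theorem (Theorem~\ref{correspondence theorem}), which the paper likewise imports from \cite{Accesses}; it needs a different argument, not a smallness assertion about $\Xi$.

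There is a second, lesser but still real, gap in the counting step. The reduction ``we may replace each $\widetilde\gamma_j$ by a homotopic simple arc, which changes neither the access nor the component count'' is unproved, and the invariance of the number of components of $\hat{\mathbb{C}}\setminus(\gamma_0\cup\gamma_1)$ meeting $\partial U$ under homotopy within an access is essentially the statement being established, so this is circular as written. Similarly, in the case $\zeta_0=\zeta_1$ the assertion that every component of $\mathbb{D}\setminus(\widetilde\gamma_0\cup\widetilde\gamma_1)$ other than the one whose closure contains $\partial\mathbb{D}\setminus\{\zeta\}$ maps into a component of $\hat{\mathbb{C}}\setminus(\gamma_0\cup\gamma_1)$ contained in $U$ requires proof: the closure of the image of such a ``lens'' is controlled only by the two curves and the cluster set of $\varphi$ at $\zeta$, and that cluster set may contain finite points of $\partial U$, in which case the surrounding complementary component does meet $\partial U$ (any finite boundary point in the closure of a component of the complement of the compact set $\gamma_0\cup\gamma_1$, but not on that set, belongs to that component). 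You flag both issues yourself as ``boundary bookkeeping'', but they, together with the correspondence theorem, are exactly the substance of the lemma rather than routine details; as it stands the proposal is a programme whose two key steps are respectively false as justified and assumed.
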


A key ingredient in our arguments is the following well-known result (see \cite[Section 2]{Accesses}), which we state only in the case where the boundary point is infinity.

\begin{theorem}
\label{correspondence theorem}
Let $U \subset \C$ be an unbounded simply connected domain.
Then there is a one-to-one correspondence between accesses from $U$ to infinity and points $\zeta \in \partial \mathbb{D}$,
such that the radial limit of a Riemann map $\phi: \mathbb{D} \to U$ at $\zeta$ exists and is equal to infinity. The correspondence is given as
follows.
\begin{itemize}
\item[(a)] If $A$ is an access to $\infty$, then there is a point $\zeta \in \partial \mathbb{D}$, such that the radial limit
of $\phi$ at $\zeta$ is equal to $\infty$, and for every $\gamma \in A$, the curve $\phi ^{-1}(\gamma)$ lands at $\zeta$. Moreover,
different accesses correspond to different points in $\partial \mathbb{D}$.
\item[(b)] If the radial limit of $\phi$ at a point $\zeta \in \partial \mathbb{D}$ is equal to $\infty$, then there exists an
access $A$ to $\infty$, such that for every curve $\eta \subset \mathbb{D}$ landing at $\zeta$ if $\phi (\eta)$ lands at some
point $w \in \hat{\C}$, then $w = \infty$ and $\phi (\eta) \in A$.
\end{itemize}
\end{theorem}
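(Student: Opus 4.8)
The plan is to deduce this from the classical theory of boundary behaviour of conformal maps, working throughout in $\hat{\mathbb{C}}$ with the spherical metric so that $\infty$ is treated as an ordinary boundary point of $U$; in this generality Fatou's theorem on radial limits, Lindel\"of's theorem, and Carath\'eodory's prime end theory all apply to a conformal map of $\mathbb{D}$ onto an arbitrary simply connected $U \subsetneq \hat{\mathbb{C}}$. (When $\hat{\mathbb{C}} \setminus \overline{U} \neq \emptyset$ one could instead conjugate by a M\"obius map to make $U$ bounded, but this is not necessary, and it is awkward when $U$ is dense in $\C$.) After normalising $\phi(0) = z_0$, the two assertions reduce to two analytic inputs plus the topological bookkeeping supplied by Lemma~\ref{lem:accesses-tracts} and its analogue inside $\mathbb{D}$. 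Equivalently, the whole statement is a repackaging of Carath\'eodory's prime end theorem -- accesses to $\infty$ correspond to prime ends of $U$ having $\infty$ as a principal point, and those correspond exactly to the points of $\partial\mathbb{D}$ at which $\phi$ has radial limit $\infty$ -- but I sketch the more hands-on route.

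The first analytic input is \textbf{Lindel\"of's theorem}: if $\phi$ has a limit along some curve in $\mathbb{D}$ landing at $\zeta \in \partial\mathbb{D}$, then $\phi$ has that same value as its radial (indeed nontangential) limit at $\zeta$. The second is that \textbf{the pull-back of an endcut to $\infty$ is an endcut}: if $\gamma \colon [0,1] \to \hat{\mathbb{C}}$ satisfies $\gamma([0,1)) \subset U$, $\gamma(0) = z_0$ and $\gamma(1) = \infty$, then $\sigma := \phi^{-1} \circ \gamma|_{[0,1)}$ converges, as $t \to 1$, to a single point $\zeta \in \partial\mathbb{D}$. I would argue the latter as follows. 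The cluster set $E \subseteq \overline{\mathbb{D}}$ of $\sigma$ at $t = 1$ is nonempty, compact and connected, and it cannot meet $\mathbb{D}$: if $\sigma(t_n) \to p \in \mathbb{D}$ then $\gamma(t_n) = \phi(\sigma(t_n)) \to \phi(p) \in U$, contradicting $\gamma(t_n) \to \infty$. Hence $E$ is a subarc of $\partial\mathbb{D}$ (possibly a single point, possibly all of $\partial\mathbb{D}$). That $E$ is in fact a single point is the classical ``no oscillation'' property of conformal images of curves: it follows from a length--area estimate (in the spirit of Gross's theorem, or of the Gehring--Hayman theorem) showing that along a suitable sequence of radii $r \to 1$ the $\phi$-image of the part of $\{|z| = r\}$ lying near $E$ has spherical diameter tending to $0$, which traps the endpoints of $\sigma$ in arbitrarily small spherical balls and, since those endpoints also escape to $\infty$, forces $\sigma$ to land. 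Applying Lindel\"of's theorem to $\sigma$ then shows the radial limit of $\phi$ at this $\zeta$ equals $\infty$.

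Granting these, part (a) runs as follows. Given an access $A$, pick $\gamma \in A$; the second input yields $\zeta = \zeta(\gamma) \in \partial\mathbb{D}$ at which $\phi^{-1}\gamma$ lands and $\phi$ has radial limit $\infty$. If $\gamma' \in A$ is another representative then, by Lemma~\ref{lem:accesses-tracts}, exactly one component of $\hat{\mathbb{C}} \setminus (\gamma \cup \gamma')$ meets $\partial U$; transporting this picture back through $\phi$ (using that $\phi^{-1}\gamma'$ also lands, at some $\zeta'$), the arcs $\phi^{-1}\gamma$ and $\phi^{-1}\gamma'$ run from $0$ and reach $\partial\mathbb{D}$ only at their endpoints, with exactly one complementary component of $\hat{\mathbb{C}}\setminus(\phi^{-1}\gamma \cup \phi^{-1}\gamma')$ whose closure meets $\partial\mathbb{D}$ in more than one point -- which forces $\zeta' = \zeta$. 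Conversely, if $\gamma \in A$, $\gamma' \in A'$ and $\phi^{-1}\gamma$, $\phi^{-1}\gamma'$ land at the \emph{same} $\zeta$, the same planar-separation argument run the other way shows exactly one component of $\hat{\mathbb{C}} \setminus (\gamma \cup \gamma')$ meets $\partial U$, so $A = A'$ by Lemma~\ref{lem:accesses-tracts}; hence distinct accesses give distinct points. For part (b): if $\phi$ has radial limit $\infty$ at $\zeta$, let $A$ be the access of $t \mapsto \phi(t\zeta)$; for any curve $\eta$ in $\mathbb{D}$ landing at $\zeta$ along which $\phi(\eta)$ lands at some $w \in \hat{\mathbb{C}}$, Lindel\"of's theorem forces $w = \infty$, and comparing $\eta$ with the radius at $\zeta$ via the same complementary-component argument gives $\phi(\eta) \in A$.

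The main obstacle is the ``no oscillation'' step: promoting the soft topological fact that the cluster set of $\phi^{-1}\circ\gamma$ is a connected subset of $\partial\mathbb{D}$ to genuine convergence to a single boundary point. Everything else is either Lindel\"of's theorem or elementary plane-separation arguments wrapped around Lemma~\ref{lem:accesses-tracts}; but this step needs real boundary-behaviour theory for conformal maps (length--area and harmonic-measure estimates), and the fact that $U$ is unbounded -- and may be dense in $\C$ -- is precisely what makes it cleanest to run the argument on $\hat{\mathbb{C}}$ with the spherical metric rather than after a normalising M\"obius change of coordinates.
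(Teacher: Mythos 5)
This theorem is not proved in the paper at all: it is imported verbatim from \cite{Accesses} (Section 2), where it is deduced from the classical boundary theory of conformal maps. Measured against that standard argument, your architecture is the same one: Lindel\"of's theorem, plus the classical landing theorem that the $\phi$-preimage of a curve in $U$ landing at a boundary point of $U$ in $\hat{\C}$ (here $\infty$) lands at a single point of $\partial\mathbb{D}$, and then plane-separation bookkeeping via Lemma~\ref{lem:accesses-tracts} to get injectivity in (a) and the access membership in (b). So the route is not new, and most of it is fine in outline.

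The genuine problem is your justification of the ``no oscillation'' step, which you correctly identify as the crux. A length--area estimate on the concentric circles $\{|z|=r\}$ does not give what you claim: finite spherical area of the (univalent) image only yields $\int_{1/2}^{1} L(r)^2\, dr/r < \infty$, and since $\int_{1/2}^{1} dr/r$ converges this forces no decay of $L(r)$ along any sequence $r\to 1$. Concretely, for a Riemann map onto a half-plane the spherical lengths of the image circles tend to the spherical length of the boundary line, not to $0$, and restricting to the sub-arc of $\{|z|=r\}$ near the cluster set $E$ does not rescue the estimate. The classical repairs are: Wolff's lemma, applied to arcs of circles \emph{centred at a boundary point} $\zeta_0\in\partial\mathbb{D}$, where the relevant integral $\int_0^\delta d\rho/\rho$ diverges and one does get cross-cuts with arbitrarily small image; or the argument that if $E$ were a nondegenerate arc then $\phi$ would have radial limit $\infty$ at almost every interior point of $E$ (each such radius is crossed by $\sigma$ arbitrarily close to $\partial\mathbb{D}$ with $\phi\circ\sigma\to\infty$, plus a maximum-principle or Lehto--Virtanen normal-function argument), which is impossible because for an omitted value $a$ the function $1/(\phi-a)$ is again univalent, hence in $H^p$ for $p<1/2$, and cannot have radial limit $0$ on a set of positive measure; or simply to cite the landing theorem and its corollary on accessible points (Pommerenke, \emph{Boundary Behaviour of Conformal Maps}, Proposition 2.14 and Corollary 2.17), which is in effect what \cite{Accesses} does. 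With that step replaced by one of these, your remaining separation arguments for (a) and (b) are the standard ones and go through, though the injectivity argument should be phrased so that the two complementary components produced when $\zeta\neq\zeta'$ are shown to contain \emph{finite} boundary points of $U$ (via almost-everywhere finite radial limits on the corresponding boundary arcs), since Lemma~\ref{lem:accesses-tracts} concerns $\partial U\subset\C$.
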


\subsection{Inner functions}
A holomorphic self-map of the unit disc $h$ is called an \textit{inner function} if radial limits exist at almost all points of the unit circle, and belong to the unit circle. Although radial limits exist for almost all points in $\partial \mathbb{D}$ the behaviour of $h$ on $\partial \mathbb{D}$ can be very irregular. Recall that a point $\zeta \in \partial \mathbb{D}$ is a \textit{singularity} of $h$ if $h$ cannot be extended holomorphically to any neighbourhood of $\zeta$ in $\mathbb{C}.$ We denote the set of singularities of $h$ by $\operatorname{Sing}(h)$. 

We will make use of the following well-known characterization of the set $\operatorname{Sing}(h)$; this follows from, for example, \cite[Theorem II.6.1 and Theorem II.6.4]{Garnett}.
\begin{lemma}
\label{lem:sing-preimages}
Let $h: \mathbb{D} \to \mathbb{D}$ be an inner function. Then, for almost all $z \in \mathbb{D}$, $\operatorname{Sing}(h)$ coincides with the set of accumulation points of the set $h^{-1}(z)$ 
\end{lemma}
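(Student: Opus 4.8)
The plan is to derive the statement from the two cited results of Garnett, namely: Frostman's theorem, in the form that for every $w$ in $\mathbb{D}$ lying outside a set of logarithmic capacity zero --- hence, in particular, for almost every $w \in \mathbb{D}$ --- the function $h_w := (h-w)/(1-\overline{w}h)$ is a Blaschke product; together with the description of $\operatorname{Sing}(B)$, for a Blaschke product $B$, as exactly the set of accumulation points in $\partial\mathbb{D}$ of the zero set of $B$ (equivalently, $B$ continues holomorphically across precisely those arcs of $\partial\mathbb{D}$ that avoid the closure of its zeros).

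The first step I would carry out is the elementary observation that $\operatorname{Sing}(h_w) = \operatorname{Sing}(h)$ for \emph{every} $w \in \mathbb{D}$. Writing $h_w = T_w \circ h$, where $T_w(\zeta) := (\zeta-w)/(1-\overline{w}\zeta)$ is a M\"obius automorphism of $\mathbb{D}$, one notes that, since $|w| < 1$, both $T_w$ and $T_w^{-1}$ are holomorphic on a neighbourhood of $\overline{\mathbb{D}}$. If $h$ continues holomorphically to a neighbourhood of a point $\zeta \in \partial\mathbb{D}$, then the (radial-limit) identity $|h|=1$ a.e.\ forces $|h| \equiv 1$ on the corresponding boundary arc by continuity, so $|h(\zeta)| = 1$ and the continuation maps some smaller neighbourhood of $\zeta$ into the domain of holomorphy of $T_w$; hence $h_w$ continues holomorphically near $\zeta$ as well, and symmetrically with $T_w^{-1}$. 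Thus $h$ and $h_w$ have the same singularities.

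The second step combines these. Fix $w \in \mathbb{D}$ outside the Frostman exceptional set; then $h_w$ is a Blaschke product whose zero set is $h_w^{-1}(0) = h^{-1}(w)$. This set is a Blaschke sequence, so all of its accumulation points lie on $\partial\mathbb{D}$, and by the cited description they form precisely $\operatorname{Sing}(h_w)$, which by the first step equals $\operatorname{Sing}(h)$. Since the exceptional set has capacity zero, hence zero area, this identity holds for almost every $w \in \mathbb{D}$, which is the assertion.

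I do not expect a serious obstacle here. One inclusion --- that the accumulation set of $h^{-1}(w)$ is contained in $\operatorname{Sing}(h)$ --- in fact holds for every $w \in \mathbb{D}$, since a point of $\partial\mathbb{D} \setminus \operatorname{Sing}(h)$ has a neighbourhood on which $h$ is holomorphic and non-constant, so that $h^{-1}(w)$ is discrete there and cannot accumulate at the point. The only place the genericity of $w$ is really used is the reverse inclusion, and there it is essential: for a singular inner function $S$ one has $S^{-1}(0) = \emptyset$ while $\operatorname{Sing}(S)$ can be all of $\partial\mathbb{D}$, so the preimage of a single value can be far too small to detect the singular set. It is exactly Frostman's theorem that rules this out for generic $w$, forcing $h_w$ to have no singular factor and hence $h^{-1}(w)$ to accumulate on all of $\operatorname{Sing}(h)$.
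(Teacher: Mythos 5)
Your argument is correct and matches the approach the paper intends. The paper gives no proof of its own but cites Garnett's Theorems II.6.1 and II.6.4, and your derivation --- Frostman's theorem to make $(h-w)/(1-\overline{w}h)$ a Blaschke product for a.e.\ $w$, the invariance of the singular set under postcomposition with a disc automorphism, and the identification of a Blaschke product's singularities with the accumulation points of its zeros --- is exactly how the lemma follows from those citations.
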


\subsection{Inner functions associated to entire functions}
\label{subs:inner}
Let $U$ be an unbounded, forward invariant Fatou component of a transcendental entire function $f$, and let $h$ be an inner function associated to $f|_U$, that is $h := \phi^{-1} \circ f \circ \phi$, where $\phi : \mathbb{D} \to U$ is a Riemann map. Note that the definition of $\operatorname{Sing}(h)$ given above is independent of the choice of $\phi$ up to a M\"{o}bius map, which we show as follows. If $\tilde{\phi}: \mathbb{D} \to U$ is another Riemann map then $\tilde{\phi}= \phi \circ M$, where $M$ is a conformal automorphism of $\mathbb{D}$, and so is a M\"{o}bius map. Hence $h$ and $\tilde{h} := \tilde{\phi}^{-1} \circ f \circ \tilde{\phi} $ are conjugate by a M\"{o}bius map. Thus $\zeta \in \operatorname{Sing}(h)$ if and only if $M^{-1}(\zeta) \in \operatorname{Sing}(\tilde{h})$. 

From the definition of singularities it is obvious that all points on $\partial \mathbb{D}$ for which the radial limit of $h$ does not exist are singularities of $h$, but they are not the only ones. As mentioned in the introduction, if $U$ is bounded, then $h$ is a finite Blaschke product and hence has no singularities on $\partial \mathbb{D}$. Moreover, if $U$ is unbounded and $\deg f|_U= \infty$, then $g$ has at least one singularity on $\partial \mathbb{D}$ (see \cite[Section 3]{Accesses}).

Following the terminology used in \cite{Accesses}, we say that $f \lvert _U$ is \textit{singularly nice} if there exists a singularity $\zeta \in \partial \mathbb{D}$ of $h$ such that the angular derivative of $h$ is finite at every point $z$ in some punctured neighbourhood of $\zeta$ in $\partial \mathbb{D}$. (For a definition of angular derivative see, for example, \cite[p.42]{Garnett}.) It follows from the definition that $f$ is singularly nice whenever all the singularities of $h$ are isolated. In the cases studied in this paper we show the stronger property that $h$ has finitely many singularities.

It follows from Lemma \ref{lem:sing-preimages} and Theorem \ref{correspondence theorem} that the singularities of $h$ are related only to accesses from  $U$ to infinity. Indeed, we look for boundary points of $U$ where the preimages of almost all points in $U$ accumulate. This can only be infinity because preimages of a point by $f$ cannot accumulate at a finite point.

\subsection{Tracts of functions in the class $\B$}
\label{subsection: class B}
We now give some general background on tracts and fundamental domains for functions in the class $\B$; these definitions are now standard. Let $D$ be a Jordan domain containing all the singular values of $f$. Each component of $f^{-1}(\mathbb{C}\setminus\overline{D})$ is known as a \emph{tract}. It is well-known that each tract is a simply connected Jordan domain whose boundary is a Jordan arc tending to infinity in both directions. Note that in all the cases we consider, $f$ has finitely many tracts; say $n$. We denote these tracts by $T_i$, where $i=1,\dots, n$. The restriction of $f$ to each tract is a universal covering onto $\mathbb{C}\setminus\overline{D}$. We let $\mathcal{T} = \bigcup_{i=1}^n T_i$ denote the union of the tracts.

Let $\delta$ be a simple curve in $\mathbb{C}\setminus\overline{D}$ which connects $\partial D$ to infinity and does not intersect $\overline{\mathcal{T}}$; it is easy to see that such a curve exists. Then $f^{-1}(\delta)$ cuts every tract into countably many components, which we call \textit{fundamental domains}. Note that if $T$ is a tract and $F \subset T$ is a fundamental domain, then $f$ maps $F$ conformally to $\mathbb{C}\setminus (\overline{D} \cup \delta)$, and maps $\partial F \cap \partial T$ onto $\partial \mathbb{D}.$

\begin{figure}
	\includegraphics[width=16cm,height=10cm]{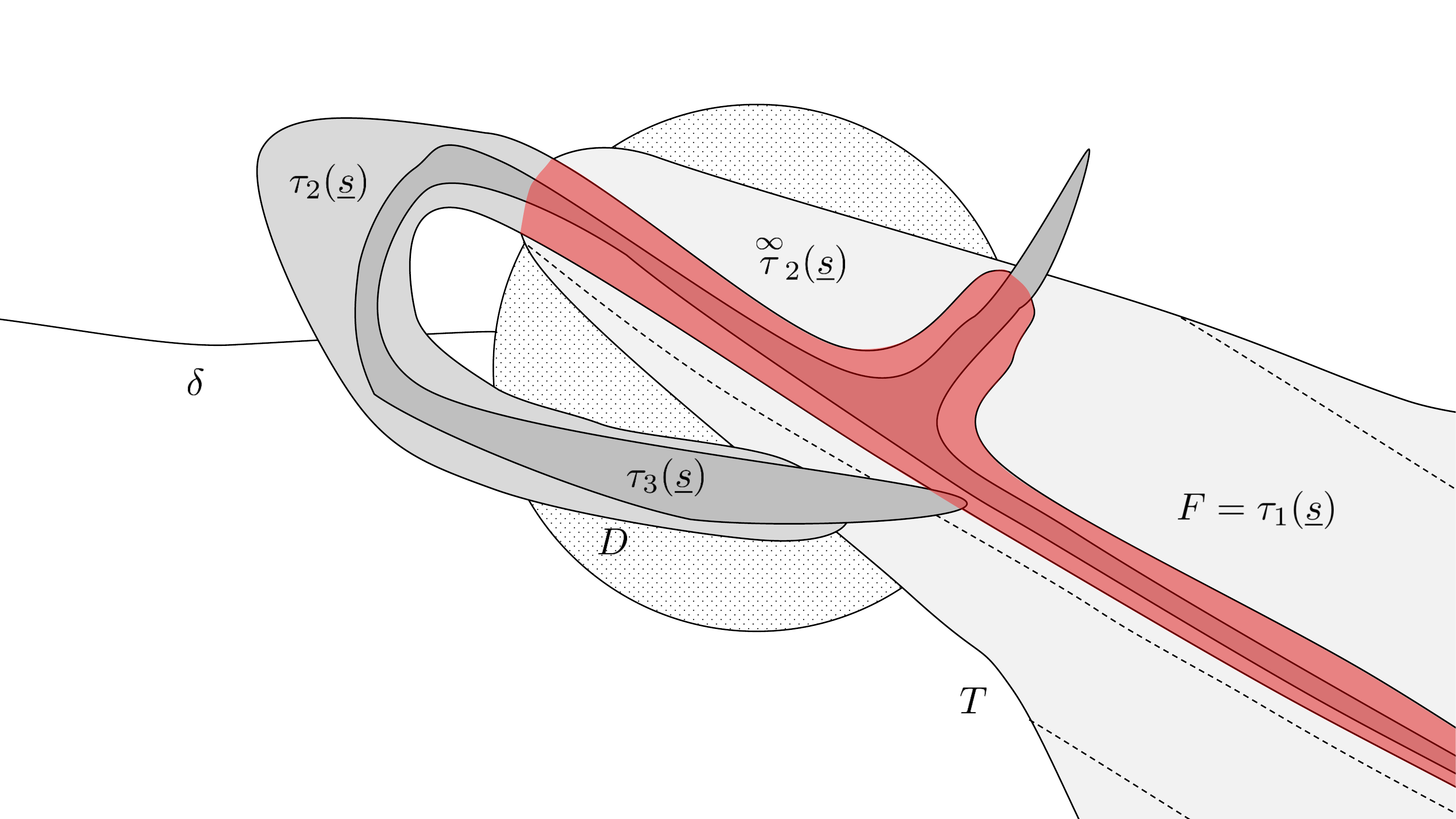}
  \caption{This illustration shows a tract, $T$, a fundamental domain, $F$, which is also a fundamental tail $\tau_1(\s)$, and two other fundamental tails, $\tau_2(\s)$ and $\tau_3(\s)$. The set $\tauinf_2(\s)$ is the unbounded component of $\tau_2(\s) \cap \tau_1(\s)$, shown in red.}\label{fig:tractsgraphic}
\end{figure}

\subsection{Fundamental tails}
\label{subs:tails}
The concept of fundamental tails was introduced in \cite{AnnaLasse}, and we need some background and definitions from that paper; see also Figure~\ref{fig:tractsgraphic}. Note that although these apply in general whenever $P(f)$ is bounded, in this paper we will only consider the case that $f$ is hyperbolic. 

When proving Theorem~\ref{theo:hyperbolic} we will let $D$ be a Jordan domain containing $P(f)$; in fact we will make $D$ even larger than this. We continue to let $\delta$ be a simple curve in $\C \setminus \overline{\mathcal{T}}$ that joins a point of $\partial D$ to infinity, as used to define the fundamental domains. Note that $\overline{F} \cap P(f)= \emptyset$, for every fundamental domain $F$, since $f(\overline{F}) \cap D = \emptyset$ and $P(f)$ is forward invariant. 

Let $W_0 := \C \setminus (\overline{D} \cup \delta)$. For each $n \in \N$, a connected component, $\tau$, of $f^{-n}(W_0)$ is called a \emph{fundamental tail of level $n$}. As for tracts, each fundamental tail is a simply connected Jordan domain whose boundary is a Jordan arc tending to infinity in both directions. Since $W_0$ is simply connected, and does not meet $P(f)$, $f^n$ is a conformal map from $\tau$ to $W_0$. Note that the fundamental tails of level $1$ are, in fact, the fundamental domains of $f$. It is straightforward to show that the image of a fundamental tail of level $n > 1$ is a fundamental tail of level $n-1$.

An \emph{external address} is an infinite sequence of fundamental domains $\s= F_0 F_1 \ldots$. For an external address $\s$, we denote by $\tau_{n}(\s)$ the unique fundamental tail of level $n$ such that $f^k(\tau_n(\s))$ has unbounded intersection with $F_k$, for $0 \leq k \leq n-1$. In this case, we say that $\tau_n(\s)$ \emph{has external address} $\s$. The existence and uniqueness of $\tau_n(\s)$ are demonstrated in \cite{AnnaLasse}. Observe, for example, that $\tau_1(\s) = F_0$, and that $\tau_2(\s)$ is the preimage of $F_1$ that has unbounded intersection with $\tau_1(\s)$.

Suppose that $\s$ is an external address, and that $n \in \N$. It can be shown that $\tau_{n+1}(\s)\cap \tau_n(\s)$ has a unique unbounded component. (It is possible for this intersection also to have bounded components; see Figure~\ref{fig:tractsgraphic}.) We let $\tauinf_{n+1}(\s)$ denote the unbounded connected component of $\tau_{n+1}(\s)\cap \tau_n(\s)$.

We are particularly interested in points whose orbit \emph{eventually} stays in these sets $\tauinf_n(\s)$. Accordingly, we say that a point $z \in \C$ \emph{has external address} $\s$ if $z \in \tauinf_{n}(\s)$ for all sufficiently large $n$. We denote the set of all points $z \in \C$ having external address $\s$ by $J_{\s}(f)$. It is easily seen that if $z \in J_{\s}$, then $f^n(z)$ lies in the (unique) unbounded component of $F_n \setminus D$ for all sufficiently large $n$. It can then be shown that $J_{\s} \subset J(f)$. In general, however, there are points in the Julia set that do not have an external address.

\subsection{Topological result}
We require the following topological result \cite[Theorem 5.6]{Nadler}, which is known as a ``Boundary Bumping Theorem''. 
\begin{theorem}
\label{nad}
Suppose that $X'$ is a continuum, that $E$ is a non-empty proper subset of $X'$, and that $K$ is a component of $E$. Then $\overline{K} \cap \overline{X' \setminus E} \ne \emptyset$.
\end{theorem}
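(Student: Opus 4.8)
The final statement is Theorem~\ref{nad}, the Boundary Bumping Theorem, cited from \cite[Theorem 5.6]{Nadler}. Since the paper merely quotes it, I will present the standard proof.

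\medskip

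The plan is to argue by contradiction, using the fact that in a continuum (a compact connected metric space) the quasicomponents coincide with the components, and that a quasicomponent is an intersection of relatively clopen sets. First I would suppose, for contradiction, that $\overline{K}\cap\overline{X'\setminus E}=\emptyset$, where $K$ is a component of the non-empty proper subset $E\subset X'$. Pick a point $p\in K$. The set $\overline{K}$ and the set $\overline{X'\setminus E}$ are then disjoint closed subsets of the compact Hausdorff space $X'$, so there are disjoint open sets $V\supset\overline{K}$ and $V'\supset\overline{X'\setminus E}$ with $X'=V\cup V'\cup R$ for some leftover closed set $R$ disjoint from both $\overline K$ and $\overline{X'\setminus E}$; more efficiently, since $X'$ is normal, I would just take an open set $V$ with $\overline{K}\subset V$ and $\overline{V}\cap\overline{X'\setminus E}=\emptyset$, so that $\overline{V}\subset E$.

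\medskip

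Next I would examine the component $K$ inside the compact set $\overline{V}$. Since $\overline{V}\subset E$ and $K$ is a component of $E$ contained in $\overline V$, and $K$ is connected, $K$ is contained in a single component $K'$ of $\overline V$; but a connected subset of $E$ containing $K$ cannot be strictly larger than $K$ (by maximality of components), and $K'\subset\overline V\subset E$ is connected and contains $K$, so in fact $K'=K$. Thus $K$ is a component of the compact set $\overline V$. Now invoke the standard fact that in a compact Hausdorff space the component of a point equals its quasicomponent: $K=\bigcap\{C : C\text{ clopen in }\overline V,\ p\in C\}$. Since $K\subset V$ and $\partial V\cap\overline V=\overline V\setminus V$ is a compact set disjoint from $K$, compactness gives finitely many clopen-in-$\overline V$ sets whose intersection $C$ satisfies $p\in C\subset V$; such a finite intersection $C$ is itself clopen in $\overline V$, hence $C$ is open in $\overline V$, hence open in $X'$ (because $C\subset V$ and $V$ is open in $X'$), and $C$ is closed in $\overline V$, hence closed in $X'$ (since $\overline V$ is closed in $X'$). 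Therefore $C$ is a non-empty clopen subset of $X'$. Because $X'$ is connected and $C\neq\emptyset$, we get $C=X'$. But $C\subset V\subset\overline V\subset E\subsetneq X'$, contradicting $E$ being a \emph{proper} subset. This contradiction establishes $\overline{K}\cap\overline{X'\setminus E}\neq\emptyset$.

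\medskip

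The main obstacle is the one genuinely non-trivial input: the coincidence of components and quasicomponents in a compact Hausdorff space, together with the compactness argument that reduces an intersection of clopen sets trapped inside $V$ to a single clopen set trapped inside $V$. Once that is in hand, the connectedness of $X'$ and the properness of $E$ finish the proof immediately. A mild care point is handling the case $E=\emptyset$ or $E=X'$, both excluded by hypothesis, and the degenerate possibility that $K$ is a single point, which the argument covers without change since the quasicomponent statement applies to any point of $\overline V$.
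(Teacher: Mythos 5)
Your proof is correct. There is nothing in the paper to compare it against: the statement is quoted verbatim from Nadler (Theorem 5.6 there) and never proved in the text, and your argument is the standard one for exactly this version of the Boundary Bumping Theorem. Each step checks out: separating $\overline{K}$ from $\overline{X'\setminus E}$ by an open $V$ with $\overline{V}\cap\overline{X'\setminus E}=\emptyset$ correctly yields $\overline{V}\subset E$ even though $E$ is an arbitrary (not open) subset; the maximality argument showing $K$ is a full component of the compactum $\overline{V}$ is right; and the passage from the quasicomponent description of $K$ to a single clopen-in-$\overline{V}$ set $C$ with $K\subset C\subset V$, which is then clopen in $X'$ and contradicts connectedness together with the properness of $E$, is exactly the classical compactness argument. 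The only external input is the coincidence of components and quasicomponents in compact Hausdorff spaces, which you flag explicitly and which is legitimate to cite; your first, slightly redundant normality sentence is superseded by the cleaner choice of $V$ you actually use, so it does no harm.
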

%
%
%
\section{Dynamics of functions satisfying the conditions of Theorem~\ref{theo:finitelymanytracts}}
\label{sec:fatou}
In this section we characterise the possible components of the Fatou set of a function that satisfies the hypotheses of Theorem~\ref{theo:finitelymanytracts}. We first prove the following simple proposition.
\begin{proposition}
\label{prop:comps}
Suppose that $f \in \B$, and that $S(f) \subset U$, where $U$ is a forward invariant Fatou component of $f$. Then $U$ is completely invariant.
\end{proposition}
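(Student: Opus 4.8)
The plan is to show directly that $f^{-1}(U)=U$. Since $f(U)\subset U$ we have $U\subset f^{-1}(U)$; moreover $\partial U\subset J(f)$ while $f^{-1}(U)\subset f^{-1}(F(f))=F(f)$, so $U$ is both relatively open and relatively closed in $f^{-1}(U)$. As $U$ is connected, it is therefore a connected component of $f^{-1}(U)$, and hence it suffices to prove that $f^{-1}(U)$ is connected.

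To set this up, fix a Riemann map $\psi\colon U\to\mathbb D$ (recall $U$ is simply connected by \cite[Theorem 3.1]{Baker}). Since $f\in\B$, the set $S(f)$ is compact, so $\psi(S(f))$ is a compact subset of $\mathbb D$; choose $r<1$ with $\psi(S(f))\subset D(0,r)$ and set $D:=\psi^{-1}(D(0,r))$. Then $D$ is a Jordan domain, bounded by a real--analytic Jordan curve, with $S(f)\subset D$ and $\overline D\subset U$, so $D$ is an admissible choice in the definition of the tracts. Write $\mathcal T:=f^{-1}(\C\setminus\overline D)$ and let $\{T_i\}_i$ be its components, the tracts; recall each $T_i$ is a Jordan domain whose boundary is a single Jordan arc tending to infinity in both directions, and $f|_{T_i}\colon T_i\to\C\setminus\overline D$ is a universal covering. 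Using $\overline D\subset U$ we get the decomposition
\[
f^{-1}(U)=f^{-1}(\overline D)\cup f^{-1}(U\setminus\overline D)=\bigl(\C\setminus\mathcal T\bigr)\cup\bigcup_i (f|_{T_i})^{-1}(U\setminus\overline D),
\]
since $\C\setminus\mathcal T=f^{-1}(\overline D)$ and $f^{-1}(U\setminus\overline D)\subset f^{-1}(\C\setminus\overline D)=\mathcal T$.

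It then remains to prove that the right--hand side is connected, which I would do in three steps. (i) $\C\setminus\mathcal T$ is connected: distinct tracts have disjoint boundaries (their boundaries are distinct components of the real--analytic $1$--manifold $f^{-1}(\partial D)$, as $\partial D\cap S(f)=\emptyset$), so in $\hat\C$ the closures $\overline{T_i}$ are pairwise disjoint except that all of them contain $\infty$; hence $\overline{\mathcal T}=\bigcup_i\overline{T_i}$ is a closed, connected, simply connected subset of $\hat\C$ (a ``bouquet'' of closed topological discs joined only at $\infty$), so $\hat\C\setminus\overline{\mathcal T}=\C\setminus\overline{\mathcal T}$ is connected, and $\C\setminus\mathcal T$, lying between $\C\setminus\overline{\mathcal T}$ and its closure, is connected too. (ii) For each $i$, the set $Y_i:=(f|_{T_i})^{-1}(U\setminus\overline D)$ is connected: $U\setminus\overline D$ is a topological annulus, and the inclusion $U\setminus\overline D\hookrightarrow\C\setminus\overline D$ is surjective on $\pi_1$ (the boundary of a slightly larger Jordan domain $D'$ with $\overline D\subset D'$, $\overline{D'}\subset U$ lies in $U\setminus\overline D$ and generates $\pi_1(\C\setminus\overline D)\cong\Z$), so the preimage of $U\setminus\overline D$ under the universal covering $f|_{T_i}$ is connected; moreover $Y_i$ contains a one--sided neighbourhood of $\partial T_i$ inside $T_i$ (pull back a one--sided neighbourhood of $\partial D$ lying in $U$), so $\overline{Y_i}\supset\partial T_i\subset\C\setminus\mathcal T$. (iii) Gluing: since $Y_i\cup\partial T_i$ is connected (it lies between $Y_i$ and its closure) and $\partial T_i\subset\C\setminus\mathcal T$, the set $Y_i\cup(\C\setminus\mathcal T)$ is connected for every $i$; hence $f^{-1}(U)=(\C\setminus\mathcal T)\cup\bigcup_i Y_i$ is connected, and therefore $f^{-1}(U)=U$.

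I expect step (i) — that deleting the (possibly infinitely many) tracts from $\C$ leaves a connected set — to be the point needing the most care, the crucial observation being that the tracts are pairwise disjoint and their closures meet only at $\infty$, so $\overline{\mathcal T}$ is simply connected in $\hat\C$ and a standard complement argument applies. The remaining ingredients (the $\pi_1$--computation in step (ii) and the ``set between a connected set and its closure is connected'' facts in steps (i) and (iii)) are routine once this is in place.
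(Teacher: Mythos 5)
Your proof takes a genuinely different route from the paper's. The paper combines \cite[Theorem 1]{Herring} (that $U \setminus f(U)$ contains at most two points, so $f(U)$ meets $D$) with \cite[Proposition 2.9]{BFR} (that $f^{-1}(D)$ is connected); together these force $f^{-1}(D) \subset U$, after which any Fatou component mapping into $U$ must meet $f^{-1}(D) \subset U$ and hence equal $U$. You instead show directly that $f^{-1}(U)$ is connected via the decomposition $f^{-1}(U) = (\C \setminus \mathcal{T}) \cup \bigcup_i Y_i$, which avoids Herring's theorem entirely. Your reduction (that $U$ is relatively clopen in $f^{-1}(U)$ and so equals it once $f^{-1}(U)$ is connected), your step (ii) (the $\pi_1$-surjectivity argument for the universal covering $f|_{T_i}$), and your step (iii) (gluing via $\partial T_i \subset \C \setminus \mathcal{T}$) are all correct and cleanly argued.

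The one place where the argument is not yet a proof is, as you anticipate, step (i). Note that $\C \setminus \mathcal{T} = f^{-1}(\overline{D})$ is exactly the closure of $f^{-1}(D)$ (every point of $\partial\mathcal{T} = f^{-1}(\partial D)$ is a limit of points of $f^{-1}(D)$, since $\partial D \cap S(f) = \emptyset$), so your claim follows from --- and is essentially --- what the paper cites as \cite[Proposition 2.9]{BFR}. Your sketch of it has genuine gaps: the identity $\overline{\mathcal{T}} = \bigcup_i \overline{T_i}$ already requires that only finitely many tracts meet any bounded set, which is true but not proved; with infinitely many tracts, the ``bouquet at $\infty$'' is an infinite wedge of closed discs whose simple-connectedness in $\hat{\C}$ is no longer routine; and the inference ``a simply connected compactum in $S^2$ has connected complement'' is an Alexander-duality statement that needs \v{C}ech cohomology and some care about which notion of simple connectedness is in play. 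The honest repair is to cite \cite[Proposition 2.9]{BFR} for step (i), which then makes your argument a valid and self-contained alternative to the paper's; attempting to rederive that fact by bare-hands topology is, in effect, re-proving that proposition.
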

\begin{proof}
Let $D \subset U$ be a Jordan domain containing $S(f)$. Since $U$ is forward invariant, $f(U) \subset U$. Moreover, by \cite[Theorem 1]{Herring}, $U \setminus f(U)$ contains at most two points. Hence $f(U)$ meets $D$, and so $U$ meets $f^{-1}(D)$. Since $S(f) \subset D$, it follows by \cite[Proposition 2.9]{BFR} that $f^{-1}(D)$ is connected. Hence $U$ contains $f^{-1}(D)$.
 
Suppose that $V$ is a Fatou component of $f$ such that $f(V) \subset U$. Again, by \cite[Theorem 1]{Herring}, $f(V)$ meets $D$, and so $V$ meets $f^{-1}(D)$. Thus $V = U$ as required.
\end{proof}
We also require some additional notation. If $f$ is a transcendental entire function, then a Fatou component, $U$, is a \emph{wandering domain} if $f^n(U) \cap f^m(U) = \emptyset$, for $n, m \geq 0$. It is easy to see that if $U$ is a wandering domain, then all  limit functions of $\{f^n|_U\}$ are constants in $\hat{\C}$; we denote the set of these constants by $\omega(U)$. It was shown in \cite{limfunctionswd} that $\omega(U) \subset P'(f) \cup \{\infty\}$; here $P'(f)$ denotes the set of finite limit points of $P(f)$.

We are now able to prove Theorem~\ref{theo:comps}. We denote by $I(f)$ the \emph{escaping set} of a transcendental entire function $f$, which is defined by
\[
I(f) := \{ z \in \C : f^n(z) \rightarrow \infty \text{ as } n \rightarrow \infty \}.
\]
\begin{proof}[Proof of Theorem~\ref{theo:comps}]
Since $f \in \B$, we can deduce, by \cite{EandL}, and the classification of periodic Fatou components \cite[Theorem 6]{bergweiler93}, that the only possible periodic Fatou components of $f$ are cycles of attracting basins, parabolic basins, or Siegel discs. Any cycle of attracting or parabolic basins meets $S(f)$, and the boundary of any Siegel disc lies in the postsingular set \cite[Theorem 7]{bergweiler93}. We can deduce that $f$ has only one such cycle, which is not a Siegel disc and contains both $D$ and $U$. 

Since $U$ is forward invariant, we can deduce that $P(f) \subset D \subset \overline{U}$. In addition, $S(f) \subset D \subset U$, and it follows by Proposition~\ref{prop:comps} that $U$ is completely invariant.

If $U$ is a parabolic basin then $P(f) \setminus U = \{\zeta\}$, where $\zeta$ is the parabolic fixed point of $f$. 
Since $f \in \B$, $f$ does not have any wandering domains in $I(f)$ \cite{EandL}.

Suppose, by way of contradiction, that $V$ is a wandering domain of $f$. It follows from the result of \cite{limfunctionswd} mentioned earlier that $\omega(V) \subset \{\zeta, \infty\},$ and thus $\omega(V)= \{\zeta\}$. Choose a point $z \in V$. By assumption $f^n(z) \rightarrow \zeta$ as $n \rightarrow \infty$. Hence, for a sufficiently large value of $n$, $f^n(z)$ lies in an attracting petal at $\zeta$ and so lies in $U$; see \cite[\S10]{Milnor} for more information on the behaviour of iterates near a parabolic fixed point. This is a contradiction. 

If $U$ is an attracting basin then $P(f) \subset U$, and it can be deduced as above that $U$ is the only Fatou components of $f$. In particular, $f$ is of disjoint type. 
\end{proof}
%
%
%
\section{Proof of Theorem \ref{theo:finitelymanytracts}}
\label{sec:disjoint}
In this section we prove Theorem~\ref{theo:finitelymanytracts}. The notation we set up in the proof of this result, which pertains to any class $\B$ function with finitely many tracts, will remain in place throughout the paper.
\begin{proof}[Proof of Theorem~\ref{theo:finitelymanytracts}]
Reducing $D$ slightly, if necessary, we can assume that we have $\overline{D} \subset F(f)$, and so, by Theorem~\ref{theo:comps}, $\overline{D} \subset U$ and $U$ is unbounded.


\begin{figure}
	\includegraphics[width=16cm,height=10cm]{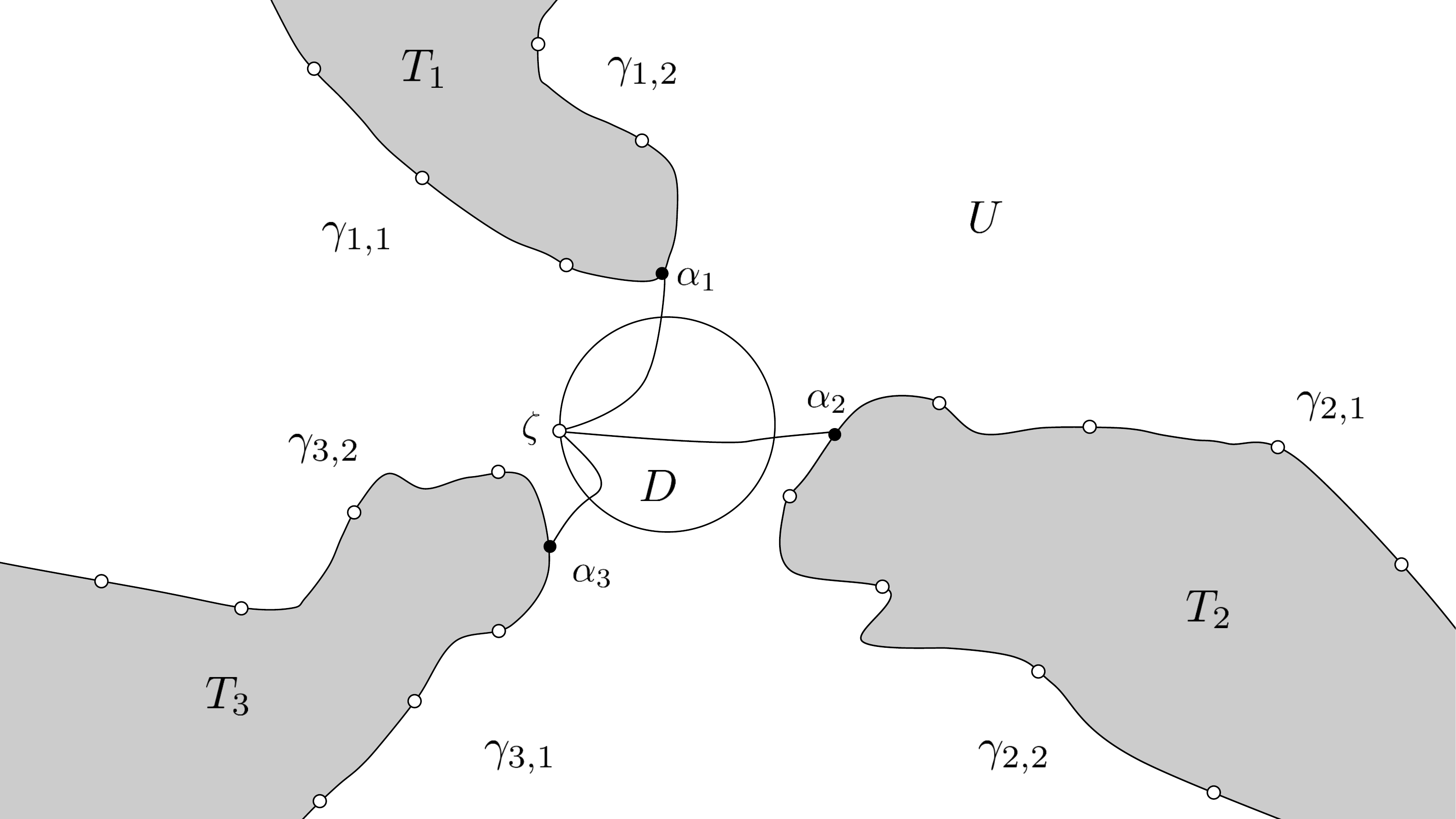}
  \caption{The preimages of $z$ which lie on the half-boundaries $\gamma_{i,k}$, shown as white dots.}\label{fig:graphic0}
\end{figure}

Now note that the boundaries of the tracts are Jordan curves tending to infinity in both directions and contained in $U$. Hence infinity is accessible from $U$. For each $1 \leq i \leq n$, we take a point $\alpha_i \in \partial T_i$. The point $\alpha_i$ divides $\partial T_i$ into two Jordan curves, $\gamma_{i,1}, \gamma_{i,2}\subset \partial T_i$ tending to infinity and sharing the same endpoint, $\alpha_i$, which we call \textit{half-boundaries}. Since the tracts follow a cyclic order at infinity and can be labeled according to that (see \cite[Lemma 2.4(a)]{anna-nuria}) we can assume that they are labeled following an increasing clockwise order and the same for the boundary curves $\gamma_{i,j}$. In other words, the half-boundaries are cyclically ordered as 
\[
\dots, \gamma_{1,1}, \gamma_{1,2}, \dots, \gamma_{i-1,2}, \gamma_{i,1}, \gamma_{i,2}, \gamma_{i+1,1}, \dots, \gamma_{n,1}, \gamma_{n,2}, \gamma_{1,1}, \dots.
\]
Following this order, we will say that two consecutive curves not belonging to the boundary of the same tract are \textit{adjacent}. 

Now take a point $\zeta \in \partial D \setminus \delta$, and we can assume that $f(\alpha_i) \ne \zeta$, for $1 \leq i \leq n$. Then $\zeta \in U$ (since $\overline{D} \subset U$), and all the preimages of $\zeta$ lie on the boundaries of the tracts; we can assume there are infinitely many. More precisely, if $T$ is a tract and $F \subset T$ is a fundamental domain, then we have exactly one preimage of $\zeta$ on $\partial F \cap \partial T$. Now the preimages that belong to $\partial T_i$ belong to either $\gamma_{i,1}$ or $\gamma_{i,2}$. It is easy to see that, in particular, there are infinitely many preimages belonging to $\gamma_{i,1}$ and infinitely many belonging to $\gamma_{i,2}$. 

For each $1 \leq i \leq n$, let $\tau_i\subset U$ be a curve which joins $\zeta$ to $\alpha_i$. For each $1 \leq i \leq n$ and each $k \in \{1, 2\}$, set $\gamma_{i,k}' = \gamma_{i,k} \cup \tau_i$. Then $\gamma_{i,k}' \subset U$ is a curve which joins $\zeta$ to infinity in $U$. Now the set $\bigcup_{i,k} \gamma_{i,k}'$ contains all the preimages of $\zeta$. Also, since $J(f) \subset \overline{\mathcal{T}}$, it follows from Lemma \ref{lem:accesses-tracts} that any two adjacent half-boundaries give rise to the same access to infinity. Hence the curves $\gamma_{i,k}'$ belong to exactly $n$ different accesses to infinity  in $U$. (We are not claiming here that there might not be other accesses to infinity from $U$.)

Next we consider a Riemann map $\phi : \mathbb{D} \to U$ and an inner function $h= \phi^{-1} \circ f \circ \phi$ associated to $f|_U$. By Theorem~\ref{correspondence theorem}(a), we have that each $\phi^{-1}(\gamma_{i,k}')$ lands at exactly one point on $\partial \mathbb{D}$ and we have $n$ such landing points. (This is because any two adjacent half-boundaries belong to the same access to infinity, and so their preimages under $\phi$ land at the same point of $\partial \mathbb{D}$.) Moreover, $\cup_{i,k} \phi^{-1}(\gamma_{i,k}')$ contains all the preimages under $h^{-1}$ of a given point in $\mathbb{D}$, i.e. the point $w= \phi^{-1}(\zeta)$. Since $\zeta$ could be almost any point on $\partial D$, and since we are free to make $D$ slightly smaller, we deduce by Lemma \ref{lem:sing-preimages} that $h$ has exactly $n$ singularities on $\partial \mathbb{D}.$
\end{proof}

%
%
%
\section{The proof of Theorem~\ref{theo:hyperbolic}}
\label{sec:hyp}
\subsection{Preliminaries and sketch of the proof}
First we need to give some definitions, which will be in place in the rest of this paper. Suppose that $f$ is hyperbolic and has finitely many tracts, and that $U$ is a forward invariant Fatou component of $f$. Let $D$ be a large disc such that $P(f) \subset D$ and $D \cap U \ne \emptyset$. Recall that the components of $f^{-1}(\C \setminus \overline{D})$ are the tracts of $f$. 

Choose a point $\zeta \in U \cap \partial D$; there is an open set of points which we can choose here, because we can also make $D$ larger.  Let $U_\zeta$ denote the preimages of $\zeta$ in $U$; in other words, $U_\zeta := f^{-1}(\zeta) \cap U$. Note that, by Lemma~\ref{lem:sing-preimages}, we can assume that $U_\zeta$ is infinite, as otherwise there is nothing to prove. As in the proof of Theorem~\ref{theo:finitelymanytracts}, all the preimages of $\zeta$ lie on the boundaries of the tracts, although we can no longer assume that those boundaries also lie in $U$.

We know that $f$ has only finitely many tracts. For each tract $T$, we choose a point $\alpha \in \partial T$. Then $\partial T \setminus \{\alpha\}$ has two components, each of which is a simple curve to infinity; these are known as half-boundaries. Some (perhaps all) of these curves contain infinitely many points of $U_\zeta$; we call these \emph{good half-boundaries}. Note that since there are only finitely many tracts, at most finitely many points of $U_\zeta$ do not lie on a good half-boundary.

We next outline the strategy of the proof of Theorem~\ref{theo:hyperbolic}. Roughly speaking, in the proof of Theorem \ref{theo:finitelymanytracts} we used portions of the boundaries of the tracts to join all the preimages of $\zeta$ to infinity. These boundaries were in $U$, and the result was then deduced from this. To prove Theorem~\ref{theo:hyperbolic} we adopt a similar approach, and again look to construct curves in $U$ which connect all but finitely many of the points of $U_{\zeta}$ to infinity. In this case we are not able to use the boundaries of the tracts, because these may meet the Julia set. Instead, we show that these curves, which are entirely contained in $U$, can be constructed by ``weaving round'' the components of the Julia set that cross the good half-boundaries. Thus our proof splits into three parts; first we show that there is a suitable bound on the size of certain ``pieces'' of Julia set that cross the good half-boundaries; second we show that this bound implies that the necessary curves can be constructed; and finally we complete the proof using the existence of these curves.


%
\subsection{Bounding ``pieces'' of the Julia set}
In fact, we only need to develop a bound on the size of ``pieces'' of a certain subset of the Julia set. To define these, first let $\mathcal{X}$ denote the collection of all closed, unbounded, connected sets $X$ that have both the following properties;
\begin{itemize}
\item there is an external address $\s$ such that $X \subset J_{\s}$;
\item the iterates of $f$ tend to infinity uniformly on $X$.
\end{itemize}
The following property of the set $\mathcal{X}$ is not stated explicitly in \cite{AnnaLasse}, although it is easily proved from results of that paper.
\begin{proposition}
\label{prop:Xdense}
Suppose that $f \in \B$ and that $P(f)$ is bounded. Then $\mathcal{X}$ is dense in $I(f)$ and $J(f)$.
\end{proposition}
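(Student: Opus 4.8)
The plan is to reduce the proposition to two facts: the existence of a single element of $\mathcal{X}$ --- provided by the ``dreadlock'' construction of \cite{AnnaLasse} --- together with the density of backward orbits in the Julia set. First I would record the routine reductions. Each $X\in\mathcal{X}$ satisfies $X\subset I(f)$, since the iterates of $f$ tend to infinity uniformly on $X$, while for $f\in\B$ one has $I(f)\subset J(f)$ by \cite{EandL}, and hence $\overline{I(f)}=J(f)$. Thus $\bigcup_{X\in\mathcal{X}}X\subset I(f)\subset J(f)$, the two asserted density statements are equivalent, and it suffices to show that $\bigcup_{X\in\mathcal{X}}X$ is dense in $J(f)$.

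Next I would fix, using \cite{AnnaLasse}, a set $X_0\in\mathcal{X}$, say with external address $\s=F_0F_1F_2\cdots$, so that $X_0\subset\overline{F_0}$. The key point is that $\mathcal{X}$ is closed under pulling back through fundamental domains: if $X\in\mathcal{X}$ has external address $\s$ and $F'$ is any fundamental domain, then, since $f$ maps $F'$ conformally onto $W_0=\C\setminus(\overline{D}\cup\delta)$, an appropriate component $X'$ of $f^{-1}(X)$ inside $\overline{F'}$ should be closed, connected, unbounded, contained in $J_{F'\s}$, and should carry the uniform escape property inherited from $X$; this is checked using the identities $\tau_{m}(F'\s)=(f|_{F'})^{-1}(\tau_{m-1}(\s))$, and the corresponding identity for the sets $\tauinf_m$, established in \cite{AnnaLasse}. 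Hence $X'\in\mathcal{X}$, and, iterating, the pull-back of $X_0$ along any finite word $F'_{n-1}\cdots F'_0$ of fundamental domains again lies in $\mathcal{X}$.

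Finally I would combine this with backward-orbit density. Choose $p\in X_0$ of large modulus lying outside the exceptional set of $f$, which has at most one point (see \cite{bergweiler93}); then $p\in J(f)\cap(\C\setminus\overline{D})$ and $p\notin\delta$, because $X_0\subset\overline{\mathcal{T}}$ while $\delta$ avoids $\overline{\mathcal{T}}$. It follows that every point of $\bigcup_{n\ge1}f^{-n}(p)$, together with the portion of its forward orbit up to $p$, lies in fundamental domains. Since $p$ is not exceptional, $\bigcup_{n\ge0}f^{-n}(p)$ is dense in $J(f)$ \cite{bergweiler93}, so given $z_0\in J(f)$ and $\varepsilon>0$ there is $q\in f^{-n}(p)$ with $|q-z_0|<\varepsilon$; letting $F'_j$ be the fundamental domain containing $f^{j}(q)$ for $0\le j\le n-1$, the $n$-fold pull-back of $X_0$ along $F'_{n-1}\cdots F'_0$ belongs to $\mathcal{X}$ and contains $q$. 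Hence $\bigcup_{X\in\mathcal{X}}X$ meets every ball centred at a point of $J(f)$, so it is dense in $J(f)$ and, by the first step, in $I(f)$. I expect the pull-back claim of the second paragraph to be the main obstacle: identifying the correct component of $f^{-1}(X)$ and verifying that it is closed and unbounded with external address $F'\s$ requires careful bookkeeping with the fundamental tails of \cite{AnnaLasse}, and is a little delicate because $f$ does not extend to a homeomorphism of $\overline{F'}$ on account of the slit $\delta$.
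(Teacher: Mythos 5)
Your overall reduction is fine: the inclusions $\bigcup_{X\in\mathcal{X}}X\subset I(f)\subset J(f)$ and the fact that $\overline{I(f)}=J(f)$ do make the two density statements equivalent, and the pull-back closure of $\mathcal{X}$ along a single fundamental domain is a reasonable (if fiddly) lemma. The paper's own proof is much shorter and takes a different route: it simply cites \cite{AnnaLasse}, whose Corollary~4.5 shows that \emph{every} escaping point has an external address, and whose Proposition~4.10 then yields density of $\mathcal{X}$ in $I(f)$ directly; the statement about $J(f)$ follows from $J(f)=\partial I(f)$.

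However, there is a genuine gap in your backward-orbit step, and it is not the one you flagged. You assert that ``every point of $\bigcup_{n\ge1}f^{-n}(p)$, together with the portion of its forward orbit up to $p$, lies in fundamental domains.'' This is false in general. A point $z$ lies in a fundamental domain precisely when $f(z)\in W_0=\C\setminus(\overline{D}\cup\delta)$. You have arranged $p\in W_0$, so $f^{-1}(p)$ does lie in fundamental domains; but for $q\in f^{-n}(p)$ with $n\ge2$ and $0\le j\le n-2$, the point $f^{j}(q)$ lies in a fundamental domain only if $f^{j+1}(q)\in W_0$, and $f^{j+1}(q)\in f^{-(n-j-1)}(p)$ is merely known to lie in a fundamental domain, not in $W_0$: a fundamental domain is a component of $f^{-1}(W_0)$ and need not itself be contained in $W_0$, since it can meet $\overline{D}$. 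Under the hypotheses of the proposition one only assumes $P(f)$ bounded, so $J(f)\cap \overline D$ can be nonempty, and since $\bigcup_n f^{-n}(p)$ is dense in $J(f)$ these orbits really do pass through $\overline D$. Whenever some $f^{j+1}(q)\in\overline{D}\cup\delta$, the chain of fundamental domains $F'_{n-1}\cdots F'_0$ you want to pull $X_0$ back through does not exist, and the construction of an element of $\mathcal{X}$ containing $q$ breaks down precisely where density is hardest to establish. Fixing this requires either restricting to preimages whose orbit avoids $\overline D\cup\delta$ and then separately proving those are dense, or returning to the external-address machinery of \cite{AnnaLasse} (every escaping point has an external address, and the pull-back is governed by that address rather than by the geometric orbit) -- which is essentially what the paper does.
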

\begin{proof}
First, by \cite[Corollary 4.5]{AnnaLasse}, we know that any point of $I(f)$ has an external address. It is then an immediate consequence of  \cite[Proposition 4.10]{AnnaLasse} that the elements of $\mathcal{X}$ are dense in $I(f)$. The final conclusion follows since $J(f) = \partial I(f)$.
\end{proof}

We prove a uniform bound, in an appropriate metric, on the size of certain connected subsets of the elements of $\mathcal{X}$. Specifying these subsets requires some additional definitions. Suppose that $X \in \mathcal{X}$. We call each \emph{bounded} component of $X \setminus \partial \mathcal{T}$ a \emph{piece}.  Let $K \subset X$ be a piece, and let $\s$ be its address. If $K \subset \tau_1(\s)$, then we say that $K$ is a \emph{bad} piece. Otherwise we say that $K$ is a \emph{good} piece. We will bound the size of good pieces, hence the terminology, and use a different technique to deal with bad pieces.

We next define the metric in which we will bound the size of good pieces, and state an important property of this metric. Set $\Omega := \C \setminus P(f)$; recall that $P(f)$ is compact. We use $\rho_{\Omega}(z)$ to denote the hyperbolic density in $\Omega$, for $z \in \Omega$, and if $S \subset \Omega$, then we denote the hyperbolic diameter in $\Omega$ of $S$ by 
\[
\operatorname{diam}_\Omega S := \sup_{z, w \in S} d_\Omega(z, w),
\]
where $d_\Omega(z, w)$ is the hyperbolic distance in $\Omega$ from $z$ to $w$.
We use the following estimate on the hyperbolic derivative in $V = f^{-1}(\Omega) \subset \Omega$; this is part of \cite[Proposition 3.1]{AnnaLasse}. Here we denote the hyperbolic derivative in $\Omega$ by 
\[
||Df(z)||_{\Omega}:= |f'(z)|\frac{\rho_{\Omega}(f(z))}{\rho_{\Omega}(z)},  \quad\text{for } z \in V.
\]
\begin{proposition}
\label{prop:exp}
Suppose that $f \in \B$, that $P(f)$ is bounded, and that $\Omega$ and $V$ are as defined above. Then for each $\epsilon > 0$ there exists $\Lambda > 1$ such that
\[
||Df(z)||_{\Omega} \geq \Lambda, \quad\text{for } z \in V \text{ with } \operatorname{dist}(z, P(f)) \geq \epsilon.
\]
\end{proposition}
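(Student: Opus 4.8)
The plan is to combine the covering-map structure of $f$ with the Schwarz--Pick lemma, and then to promote a pointwise strict bound to a uniform one, using compactness away from infinity and Koebe-type distortion estimates near infinity.

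Since $S(f)\subset P(f)$, the domain $\Omega=\C\setminus P(f)$ contains no singular value of $f$, so $f$ restricts to a holomorphic covering map $f\colon V\to\Omega$; moreover $V\subset\Omega$, because $P(f)$ is forward invariant. A covering map is a local isometry for the hyperbolic metrics of its domain and image, so $|f'(z)|\,\rho_\Omega(f(z))=\rho_V(z)$ for $z\in V$, and hence
\[
\|Df(z)\|_\Omega=\frac{\rho_V(z)}{\rho_\Omega(z)},\qquad z\in V.
\]
The inclusion $V\hookrightarrow\Omega$ is holomorphic and injective but not surjective --- the full preimage $f^{-1}(P(f))$ of the non-empty compact set $P(f)$ is unbounded, so $V$ is a proper subdomain of $\Omega$ --- and so the strict Schwarz--Pick inequality gives $\rho_\Omega(z)/\rho_V(z)<1$, that is $\|Df(z)\|_\Omega>1$, for \emph{every} $z\in V$. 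It remains to make this bound uniform on $K_\e:=\{z\in V:\dist(z,P(f))\ge\e\}$.

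To that end I would split $K_\e$ at a large radius $R$. On $K_\e\cap\{|z|\le R\}$ one argues by compactness: if $\|Df(z_n)\|_\Omega\to1$ along a sequence in this set, then after passing to a subsequence $z_n\to z_0$ with $\dist(z_0,P(f))\ge\e$, so $z_0\notin P(f)$; if $z_0\in V$ this contradicts the strict inequality and the continuity of $z\mapsto\|Df(z)\|_\Omega$ on $V$, while if $z_0\in\partial V$ --- so $f(z_0)\in P(f)$ --- then comparing the rates at which $\rho_V(z)$ and $\rho_\Omega(f(z))$ blow up as $z\to z_0$ (both governed by the local degree of $f$ at $z_0$) shows the ratio cannot tend to $1$. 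On $K_\e\cap\{|z|>R\}$, for $R$ large this set is contained in the union of the tracts of $f$ together with the components of $f^{-1}(\C\setminus P(f))$ on which $f$ is bounded; on each of these I would apply the Koebe distortion theorem to the univalent logarithmic transform associated by Eremenko and Lyubich to that tract --- over $\infty$ in the first case, over a finite asymptotic value lying in $P(f)$ in the second --- to obtain a lower bound for $|f'(z)|$, and combine it with elementary estimates for $\rho_\Omega$ at $f(z)$ and, crucially, with the fact that $\Omega$ is the complement of a compact set, so that $\rho_\Omega(z)\asymp 1/(|z|\log|z|)$ for $|z|$ large. This yields $\|Df(z)\|_\Omega\gtrsim\log|z|$, hence $\|Df(z)\|_\Omega\to\infty$ as $z\to\infty$ within $V$, and in particular $\|Df\|_\Omega$ is bounded away from $1$ on $K_\e\cap\{|z|>R\}$. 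Combining the two parts produces the required $\Lambda>1$.

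The step I expect to be the main obstacle is the behaviour near infinity: the inequality $\|Df(z)\|_\Omega>1$ is immediate from the covering structure, but it could a priori degenerate to $1$ along a sequence tending to infinity, and ruling this out genuinely requires the Eremenko--Lyubich geometry of the tracts together with the precise decay of the hyperbolic metric of $\Omega$ there --- this is the one point at which the class $\B$ hypothesis and the boundedness of $P(f)$ enter in an essential way. The analysis along approaches to $\partial V\setminus P(f)$, and the handling of any degenerate boundary behaviour of the logarithmic transforms, are subsidiary technical points.
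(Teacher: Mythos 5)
The paper does not actually prove this statement: Proposition~\ref{prop:exp} is imported verbatim from Benini and Rempe--Gillen ([AnnaLasse, Proposition~3.1]), with Rempe's earlier rigidity estimate ([Rigidity, Lemma~5.1]) cited as an alternative in the remark that follows. So there is no in-paper argument to compare against; what follows is an assessment of your sketch against the proof of the cited result.

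Your opening is correct and coincides with the cited argument: $\Omega$ omits $S(f)$, so $f\colon V\to\Omega$ is a covering map, hence a local hyperbolic isometry, giving $\|Df(z)\|_\Omega=\rho_V(z)/\rho_\Omega(z)$; forward invariance of $P(f)$ gives $V\subset\Omega$; and $V\subsetneq\Omega$ (since $f^{-1}(P(f))\setminus P(f)$ is non-empty and in fact unbounded), so strict Schwarz--Pick gives $\|Df(z)\|_\Omega>1$ pointwise. Incidentally, once you have written $\|Df\|_\Omega=\rho_V/\rho_\Omega$, your boundary sub-case is immediate and needs none of the ``comparing blow-up rates'' discussion: as $z\to z_0\in\Omega\cap\partial V$ we have $\rho_V(z)\to\infty$ while $\rho_\Omega(z)\to\rho_\Omega(z_0)<\infty$, so the ratio tends to infinity regardless of local degree.

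The step carrying all the weight is, as you say yourself, uniformity near infinity, and there your argument is a sketch with a genuine gap. The Eremenko--Lyubich/Koebe expansion estimate you invoke is of the form $|f'(z)|\gtrsim|f(z)|\log\bigl(|f(z)|/R\bigr)$, and it degenerates as $z$ approaches the boundary of a tract, where $|f(z)|\downarrow R$; that is exactly a regime in which points of $V\cap\{\operatorname{dist}(\cdot,P(f))\ge\e\}$ tend to infinity, and nothing in your outline supplies an alternative bound there. (A correct argument in that regime can be made --- $f(z)$ then stays in a compact subset of $\Omega$, so one must instead control $|f'|$ and $\rho_\Omega(f(z))$ directly --- but it is a different estimate, not the one you cite.) Your treatment of preimage components lying over bounded components of $\Omega$ is similarly gestured at rather than carried out, and you are also trying to prove more than is needed: showing $\|Df(z)\|_\Omega\to\infty$ as $z\to\infty$ is true but stronger than the proposition. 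The published proof is cleaner and avoids the case split entirely: having reduced to bounding $\rho_\Omega/\rho_V$ away from $1$, it invokes a standard hyperbolic-geometry lemma giving $\rho_\Omega(z)/\rho_V(z)\le\Phi\bigl(d_\Omega(z,\Omega\setminus V)\bigr)$ for an explicit increasing function $\Phi<1$, and then shows that $d_\Omega(z,\Omega\setminus V)$ is uniformly bounded on $\{z\in V:\operatorname{dist}(z,P(f))\ge\e\}$, using the boundedness of $P(f)$ and the comparison $\rho_\Omega(z)\asymp 1/(|z|\log|z|)$ near infinity which you already identify. This gives the statement in one stroke, with no distortion theorem and no separate analysis at infinity.
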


\begin{remark}\normalfont
Note that we could alternatively here have used the estimate for hyperbolic functions given in \cite[Lemma 5.1]{Rigidity}; see also \cite[Proposition 2.2]{BFR}. This leads to an equivalent but slightly different proof.
\end{remark}

We now prove the following.
\begin{proposition}
\label{prop:hyp-julia}
Suppose that $f$ is hyperbolic. Then there is a choice of $D$ such that there exists $\ell > 0$ with the following property. If $K$ is a good piece, then
\begin{equation}
\label{kbound}
\operatorname{diam}_\Omega(K) \leq \ell.
\end{equation}
\end{proposition}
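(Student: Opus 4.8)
The plan is to exploit the uniform expansion in Proposition~\ref{prop:exp} together with the fact that a good piece $K$, of address $\s$, is not contained in the first fundamental tail $\tau_1(\s)$, so that some forward image of $K$ returns to a bounded region before escaping. Concretely, suppose $K \subset X \in \mathcal{X}$ is a good piece with address $\s = F_0 F_1 \ldots$. Since $K \subset J_\s$, for all sufficiently large $n$ we have $f^n(K) \subset \tauinf_n(\s) \subset \tau_1(\s^{(n)})$ for the shifted address $\s^{(n)} = F_n F_{n+1}\ldots$; but $K$ itself is \emph{not} in $\tau_1(\s)$, i.e. $K$ is not contained in $F_0$. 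The first observation I would make is that, because $f^n$ maps $\tau_n(\s)$ conformally onto $W_0 = \C \setminus (\overline D \cup \delta)$, and $K$ is a bounded component of $X \setminus \partial\mathcal{T}$, the piece $K$ lies in a single fundamental tail of some level $m \geq 1$ but (being good) strictly inside a tail of level $\geq 2$ relative to where it sits; the key point is that $f$ maps $K$ into the tract structure in a way that, after finitely many steps, lands it in $\overline{W_0}$, i.e. close to $\overline D = \overline{D}$. So there is a first time $N = N(K) \geq 1$ at which $f^N(K)$ meets $\overline{W_0}$, equivalently $f^N(K) \cap \partial D \neq \emptyset$ roughly speaking, while $f^j(K)$ for $0 \le j < N$ stays in the ``escaping'' part of the tracts away from $P(f)$.

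Next I would set up the expansion estimate. Since $D$ is chosen with $P(f) \subset D$ and we are free to enlarge $D$, I would fix $\epsilon > 0$ and enlarge $D$ so that $\C \setminus D$ (hence every tract, hence every fundamental tail and every piece living in the tracts) is at distance $\geq \epsilon$ from $P(f)$; Proposition~\ref{prop:exp} then supplies $\L > 1$ with $||Df(z)||_\Omega \geq \L$ for all $z$ in the relevant region. Consequently, along the orbit $K, f(K), \ldots, f^{N-1}(K)$ — all of which lie in $V \cap (\C \setminus D)$, hence in the expanding region — we get
\[
\operatorname{diam}_\Omega(K) \leq \L^{-(N-1)} \operatorname{diam}_\Omega(f^{N-1}(K)) \leq \L^{-(N-1)}\operatorname{diam}_\Omega(f^{N}(K))\cdot C,
\]
or more cleanly $\operatorname{diam}_\Omega(K) \le \L^{-N}\operatorname{diam}_\Omega(f^N(K))$ once we account for the last step (for which the $\epsilon$-condition also holds, since $f^{N-1}(K) \subset \C\setminus D$). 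So it suffices to bound $\operatorname{diam}_\Omega(f^N(K))$ \emph{independently of $K$}. The final image $f^N(K)$ is a connected set that lies in a single fundamental tail and meets $\overline{W_0}$; I would argue it is contained in $\overline{W_0} \cup (\text{one fundamental tail})$, but in fact the cleanest route is: $f^N(K)$ is a connected subset of some piece-like set sitting inside $\overline{\tau_1(\s^{(N)})} = \overline{F_N}$, and its image $f^{N+1}(K)$ (if $K$ were not yet ``done'') would lie in $\overline{W_0}$; but since $f^N(K)$ already meets $\overline{W_0}$ and $W_0$ has finite hyperbolic diameter in $\Omega$ bounded by a constant depending only on $D$ (as $\overline{W_0}$ is a fixed compact-in-$\Omega$ set, up to the slit $\delta$), we get $\operatorname{diam}_\Omega(f^N(K)) \le \operatorname{diam}_\Omega(\overline{W_0} \cup \overline{F_N}) $. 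This reduces everything to showing that $\operatorname{diam}_\Omega$ of (the closure of) a single fundamental domain, together with $\overline{W_0}$, is uniformly bounded over the finitely many fundamental domains that can occur — and here finiteness of the number of tracts is essential, together with the fact that each $\overline F$ is at distance $\geq \epsilon$ from $P(f)$ and $f$ maps $\overline F$ properly onto $\overline{W_0}$, so $\operatorname{diam}_\Omega(\overline F) \le \L^{-1}\operatorname{diam}_\Omega(\overline{W_0})$ by one more application of Proposition~\ref{prop:exp}. Setting $\ell := \operatorname{diam}_\Omega(\overline{W_0}) + \L^{-1}\operatorname{diam}_\Omega(\overline{W_0})$ (or any such explicit combination) then works, since $N \geq 1$ forces $\L^{-N} \le \L^{-1} < 1$.

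I would organise the write-up as: (i) fix $\epsilon$ and enlarge $D$ so $\operatorname{dist}(\C\setminus D, P(f)) \ge \epsilon$, invoke Proposition~\ref{prop:exp} to get $\L$; (ii) note $\operatorname{diam}_\Omega(\overline{W_0}) =: M < \infty$ since $\overline{W_0}$ is a closed subset of $\Omega = \C \setminus P(f)$ that avoids a neighbourhood of $P(f)$ and, although unbounded, has finite hyperbolic diameter because $\Omega$ is hyperbolic with $P(f)$ compact (this needs the standard fact that $\rho_\Omega$ decays like $1/(|z|\log|z|)$ near infinity, so an unbounded tract-complement still has finite $\operatorname{diam}_\Omega$); (iii) given a good piece $K$ of address $\s$, define $N = N(K) \ge 1$ as the first iterate landing in $\overline{W_0}$, show $N$ is finite and that $f^j(K) \subset V \cap (\C \setminus D)$ for $0 \le j \le N-1$; (iv) chain the expansion inequality down from $f^N(K)$ to $K$; (v) bound $\operatorname{diam}_\Omega(f^N(K))$ by $M$ plus a controlled term, using that $f^N(K)$ is connected, meets $\overline{W_0}$, and sits inside one fundamental tail. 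The main obstacle I anticipate is step (ii)/(v): making rigorous that these \emph{unbounded} tracts / tails / $\overline{W_0}$ have \emph{finite} hyperbolic diameter in $\Omega$, and that the bound on $\operatorname{diam}_\Omega(f^N(K))$ is genuinely uniform in $K$ — this is where the hyperbolicity of $f$ (compactness of $P(f)$), the finiteness of the number of tracts, and the precise geometry of fundamental domains versus $W_0$ all have to be used carefully; the ``good'' hypothesis ($K \not\subset \tau_1(\s)$) is exactly what guarantees $N \geq 1$ so that the factor $\L^{-N}$ is genuinely $< 1$ and the argument does not collapse.
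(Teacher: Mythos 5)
There is a genuine gap in your proposal, precisely at the step you flag as the main obstacle (steps (ii) and (v)). You claim that $\operatorname{diam}_\Omega(\overline{W_0})$ is finite, appealing to the decay $\rho_\Omega(z) \asymp 1/(|z|\log|z|)$ near infinity. But that decay is not fast enough for your conclusion: the antiderivative of $1/(t\log t)$ is $\log\log t$, which diverges as $t\to\infty$, so the hyperbolic distance in $\Omega = \C\setminus P(f)$ from any fixed point to infinity is infinite. In fact the paper's own Corollary~\ref{cor:hyp-julia} uses exactly this divergence (the displayed estimate $\operatorname{diam}_\Omega(K_k) \geq c\log\bigl(\log|z_k|/\log r\bigr)\to\infty$). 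Thus the unbounded set $W_0 = \C\setminus(\overline{D}\cup\delta)$, like every tract and every fundamental tail, has infinite hyperbolic diameter in $\Omega$, and your chain $\operatorname{diam}_\Omega(K) \le \Lambda^{-N}\operatorname{diam}_\Omega(f^N(K)) \le \Lambda^{-N}M$ has $M=\infty$ and yields nothing. There is also a smaller confusion in your first paragraph: for $z\in J_\s$ one has $z\in\tauinf_n(\s)$ for large $n$, not $f^n(z)\in\tauinf_n(\s)$; the fundamental tails are \emph{preimage} objects, $\tau_n(\s)\subset f^{-n}(W_0)$.

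The paper's proof sidesteps the infinite-diameter problem by a finer decomposition instead of a single "first-return" estimate. It slices a good piece $K$ into the annular sets $\tau_{n+1}(\s)\setminus\tau_n(\s)$ and uses the key identity $f^n\bigl(\tau_{n+1}(\s)\setminus\tau_n(\s)\bigr) = F_n\cap D$. The intersection with $D$ is what restores boundedness: $F_n\cap D$ is a bounded set, only finitely many fundamental domains meet $D$, and all fundamental tails lie a fixed Euclidean distance from $P(f)$ (after enlarging $D$ to contain the forward-invariant neighbourhood $\overline{G}$), so there is a uniform $L$ with $\operatorname{diam}_\Omega(F_n\cap D)\le L$. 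Expansion (Proposition~\ref{prop:exp}) then gives $\operatorname{diam}_\Omega\bigl(\tau_{n+1}(\s)\setminus\tau_n(\s)\bigr)\le L/\Lambda^n$, and since a good piece avoids $\tau_1(\s)$, is connected, and is contained in $\bigcup_{n\ge 1}\bigl(\tau_{n+1}(\s)\setminus\tau_n(\s)\bigr)$, summing the geometric series gives $\ell = L/(\Lambda(\Lambda-1))$. Any argument that tries to pull back a single iterate $f^N(K)$ must confront that the natural targets ($W_0$, a tract, a tail) are unbounded and have infinite $\operatorname{diam}_\Omega$; the intersection with the bounded disc $D$ is the indispensable ingredient your proposal is missing.
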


\begin{proof}
A rough outline of the proof is as follows; recall the definition of fundamental tails in Subsection~\ref{subs:tails}. First we show that, with a suitable definition of the set $D$, there is an upper bound for the hyperbolic diameter in $\Omega$ of $(\tau_{n+1}(\s) \setminus\tau_{n}(\s))$, for each external address $\s$ and $n \in \N$; see \eqref{nickbound} below. We then use this to obtain a uniform bound on $\operatorname{diam}_{\Omega} (K)$, where $K$ is a good piece.

We want to be able to apply Proposition~\ref{prop:exp} to the collection of fundamental tails. Suppose that $\s$ is an external address and $n \in \N$. Note that $f(P(f)) \subset P(f)$, and so $\Omega \subset f(\Omega)$. Hence
\[
f^n(\tau_n(\s)) = W_0 \subset \Omega \subset f^{n-1}(\Omega).
\]
It follows that $\tau_n(\s) \subset f^{-1}(\Omega) = V$. So in order to apply Proposition~\ref{prop:exp} it remains to ensure that the fundamental tails are at least some fixed Euclidean distance from $P(f)$.

It is known that there is a bounded open set $G$, which is not necessarily connected, containing $P(f)$, and such that $f(\overline{G}) \subset G$; see, for example, the comments following \cite[Proposition 2.6]{HMB}. Increasing the size of the Jordan domain $D$, if necessary, we can assume that $\overline{G} \subset D$. Hence
\[
f^n(\tau_n(\s)) = W_0 \subset \C \setminus \overline{G} \subset f^n(\C \setminus \overline{G}).
\]
It follows that $\tau_n(\s) \subset \C \setminus \overline{G}$. Thus there exists $\epsilon > 0$ such that all fundamental tails are a distance at least $\epsilon$ from $P(f)$.

It then follows from Proposition~\ref{prop:exp} that there exists $\Lambda>1$ such that, for each external address $\s$, we have
\begin{equation}
\label{hypexpansion}
||Df(z)||_{\Omega} \geq \Lambda, \quad\text{for } z \in \bigcup_{n \in \N} \tau_n(\s).
\end{equation}

Suppose now that $\s = F_0 F_1 \ldots$ is an external address. Suppose that $n \in \N$. It follows from the definition of $\tau_{n+1}(\s)$ that $f^n(\tau_{n+1}(\s) \setminus \tau_{n}(\s)) = F_n \cap D$.

Now, only finitely many fundamental domains intersect $D$. Moreover, we have that $\operatorname{dist}(F, P(f))\geq \varepsilon$ for any fundamental domain $F$. We deduce that there exists $L>0$ such that $\operatorname{diam}_{\Omega} (F \cap D) \leq L$, for any fundamental domain $F$.
Combined with (\ref{hypexpansion}), this implies that
\begin{equation}
\label{nickbound}
\operatorname{diam}_{\Omega}(\tau_{n+1}(\s) \setminus \tau_{n}(\s)) \leq \frac{L}{\Lambda^n}, \quad\text{for } n \in \N.
\end{equation}

This completes the first stage of the proof. Next, let $$\ell := \frac{L}{\Lambda(\Lambda-1)} = L\left(\frac{1}{\Lambda} + \frac{1}{\Lambda^2} + \ldots\right).$$

Suppose that $K \subset X \in \mathcal{X}$ is a good piece, and that all points of $K$ have external address $\s$. We will now use \eqref{nickbound} to prove \eqref{kbound}. Since the iterates of $f$ escape to infinity uniformly on $K$, there exists $N \geq 2$ such that $K \subset \tau_N(\s)$. Since $\tau_1(\s) = F_0$ and $K$ is a good piece, $K \cap \tau_1(\s) = \emptyset$.

We now claim that 
\begin{equation}
\label{aclaim}
K \subset \bigcup_{n\in\N} \tau_{n+1}(\s) \setminus \tau_{n}(\s).
\end{equation}
To prove this, suppose that $z \in K$. Let $1 < n < N$ be the smallest integer such that $z \notin \tau_n(\s)$ and $z \in \tau_{n+1}(\s)$; there must be such an $n$ by the comments above. Hence $z \in \tau_{n+1}(\s) \setminus \tau_n(\s)$, which completes the proof of \eqref{aclaim}.

Equation \eqref{kbound} is then an immediate consequence of \eqref{aclaim}, together with \eqref{nickbound}.
\end{proof}

From now on we will assume that $D$ is chosen suitably for the conclusions of Proposition~\ref{prop:hyp-julia} to hold. We then use the following consequence of Proposition~\ref{prop:hyp-julia}.
\begin{cor}
\label{cor:hyp-julia}
Suppose that $f$ is hyperbolic. Suppose that $(K_k)_{k \in N}$ is a sequence of good pieces, that $z_k \in K_k$, for $k \in \N$, and that $z_k \rightarrow \infty$ as $k \rightarrow \infty$. Then the sets $K_k$ tend uniformly to infinity as $k$ tends to infinity.
\end{cor}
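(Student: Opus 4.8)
The plan is to deduce Corollary~\ref{cor:hyp-julia} from the uniform hyperbolic bound \eqref{kbound} together with standard comparison estimates between the hyperbolic metric on $\Omega = \C \setminus P(f)$ and the Euclidean metric. Recall that ``tend uniformly to infinity'' means that for every $R > 0$ there is $k_0$ such that $K_k \subset \{|z| > R\}$ for all $k \geq k_0$. So the goal is: given $R$, find $k_0$ so that no $K_k$ with $k \geq k_0$ meets $\overline{\mathbb{D}(0,R)}$.

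The key point is a lower bound for the Euclidean size of a hyperbolically-small set that strays near a large ball. Fix any $R > 0$; we may assume $\overline{\mathbb{D}(0,R)} \supset P(f)$, by enlarging $R$. Suppose for contradiction that there is a subsequence (relabelled) with $K_k \cap \overline{\mathbb{D}(0,R)} \ne \emptyset$ while $z_k \to \infty$ with $z_k \in K_k$. Each $K_k$ is connected, so it contains a sub-arc $\sigma_k \subset K_k$ joining a point $a_k \in \partial\mathbb{D}(0,R)$ (or inside it) to a point $b_k$ with $|b_k|$ as large as we like, say $|b_k| \geq 2R$; and $\sigma_k$ stays inside the annulus $\{R \leq |z| \leq |b_k|\}$ except possibly entering the ball. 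Since $P(f) \subset \mathbb{D}(0,R)$, every point of $\Omega$ with $|z| = R$ has Euclidean distance to $P(f)$ bounded below, and more usefully, one has the standard estimate $\rho_\Omega(z) \geq \tfrac{1}{2\,\operatorname{dist}(z, P(f))} \geq \tfrac{c}{|z|}$ for all $z$ with $|z| \geq 2R$, say (using that $P(f)$ is compact and contained in $\mathbb{D}(0,R)$; here $c$ depends only on $R$). Integrating this density along $\sigma_k$ from a point on $|z| = 2R$ out to $b_k$ gives a hyperbolic length at least $c \log(|b_k|/(2R)) \to \infty$ as $k \to \infty$, because $|b_k|$ can be taken arbitrarily large (it is at least $|z_k|$ along a further sub-arc, and $z_k \to \infty$). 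Hence $\operatorname{diam}_\Omega(K_k) \to \infty$, contradicting \eqref{kbound}.

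Thus for every $R > 0$ and all sufficiently large $k$, $K_k \cap \overline{\mathbb{D}(0,R)} = \emptyset$, i.e. $K_k \subset \{|z| > R\}$; this is precisely the statement that the $K_k$ tend uniformly to infinity. One should double-check the one technical input used, namely the inequality $\rho_\Omega(z) \geq c/|z|$ for $|z|$ large: this follows from the monotonicity of the hyperbolic density under inclusion of domains (since $\Omega \supset \C \setminus \overline{\mathbb{D}(0,R)}$, whose density at $z$ is comparable to $1/(|z|\log(|z|/R))$, or from $\Omega \subset \C \setminus \{p_0\}$ for a point $p_0 \in P(f)$) combined with the crude comparison $\rho_\Omega(z) \geq \tfrac{1}{2\operatorname{dist}(z,\partial\Omega)}$ from the Koebe estimate; either way the dependence is only on $R$ and $P(f)$.

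The main obstacle is bookkeeping rather than anything deep: one must organise the connectedness argument so that, from the single hypothesis $z_k \to \infty$ with $z_k \in K_k$, one extracts a sub-arc of $K_k$ running from near a fixed compact set out to a point of large modulus — this is where connectedness of $K_k$ (guaranteed because $K_k$ is a component, hence connected) is essential, since a hyperbolically small set could otherwise be ``spread out'' without being large in a connected way. Once that sub-arc is in hand, the metric comparison makes the hyperbolic length blow up, and Proposition~\ref{prop:hyp-julia} closes the argument. I would present this as a short clean contradiction argument, quoting the density comparison as standard (with a one-line justification and a reference such as to a text on hyperbolic metrics).
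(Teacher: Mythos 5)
Your overall strategy is the same as the paper's: argue by contradiction, use connectedness of each $K_k$ to extract a sub-arc from a fixed compact set out to the escaping point $z_k$, bound the hyperbolic length of that arc from below by a divergent integral, and contradict the uniform bound of Proposition~\ref{prop:hyp-julia}. However, the single technical input you rely on, $\rho_\Omega(z) \geq c/|z|$ for large $|z|$, is false, and both justifications you offer for it are incorrect. The Koebe-type lower bound $\rho_\Omega(z) \geq \tfrac{1}{2\operatorname{dist}(z,\partial\Omega)}$ holds only for \emph{simply connected} domains, whereas $\Omega = \C \setminus P(f)$ has an isolated puncture at $\infty$ in $\hat{\C}$ and so is never simply connected; for such domains the density near the omitted point $\infty$ decays like $1/(|z|\log|z|)$, an order of magnitude smaller than $1/|z|$. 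Your other suggested justification, monotonicity applied to $\Omega \supset \C\setminus\overline{\mathbb{D}(0,R)}$, gives an \emph{upper} bound $\rho_\Omega \leq \rho_{\C\setminus\overline{\mathbb{D}(0,R)}} \asymp 1/(|z|\log(|z|/R))$ (domain monotonicity reverses inequalities), which in fact shows your claimed lower bound cannot hold; and the inclusion $\Omega \subset \C\setminus\{p_0\}$ is useless because a once-punctured plane carries no hyperbolic metric.

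The correct input, which is what the paper uses, is to pick (after an affine conjugacy) two points $\{0,1\}\subset P(f)$, apply monotonicity to $\Omega\subset\C\setminus\{0,1\}$, and invoke the classical estimate (Hayman, or Landau) $\rho_{\C\setminus\{0,1\}}(z) \geq \tfrac{1}{|z|(\log|z|+10\pi)} \geq \tfrac{c}{|z|\log|z|}$ for $|z| \geq r > 1$. The integral of this density from $|z|=r$ to $|z|=|z_k|$ is $c\log\bigl(\log|z_k|/\log r\bigr)$, which still diverges (just doubly-logarithmically instead of logarithmically), so your contradiction argument goes through once the estimate is repaired. In short: right idea, right structure, but the density bound as stated is wrong and would not survive scrutiny; replace $c/|z|$ by $c/(|z|\log|z|)$ with the monotonicity-plus-thrice-punctured-sphere justification.
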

\begin{proof}
Suppose that this were not the case. Taking a subsequence, if necessary, there exist $r>1$ and points $w_k \in K_k$, for $k \in \N$, such that $|w_k| < r$ for each $k$.

After a conjugacy, if necessary, we can assume that $\{0, 1\} \subset P(f)$. We then have, by the Schwarz-Pick lemma and \cite[Theorem 9.13]{Hayman}, that there is a constant $c>0$ such that 
\[
\rho_\Omega(z) \geq \rho_{\C\setminus\{0,1\}}(z) \geq \frac{1}{|z|(\log|z| + 10\pi)} \geq \frac{c}{|z|\log|z|}, \quad\text{for } z \in \Omega, \ |z| \geq r.
\]

It follows that
\[
\operatorname{diam}_\Omega (K_k) \geq \int_{r}^{|z_k|} \frac{c}{t \log t} \ dt = c \log \frac{\log |z_k|}{\log r} \rightarrow\infty \text{ as } k\rightarrow\infty. 
\]
This is in contradiction to Proposition~\ref{prop:hyp-julia}.
\end{proof}
%
%
%
%
%
\subsection{Constructing the curves}
We next show how to use the conclusions of Corollary~\ref{cor:hyp-julia} to construct the curves mentioned earlier. Our construction is formalised in the following lemma.
\begin{lemma}
\label{lemm:mainclaim}
Suppose that $f$ is hyperbolic and has finitely many tracts, that $U$ is a forward invariant Fatou component of $f$, that $U_\zeta$ is as defined above (i.e. the set of preimages of $\zeta \in \partial D$ in $U$), and that 
$\gamma$ is a good half-boundary. Then there is a simple curve $\tilde{\gamma} \subset U$, which joins a finite point to $\infty$, 
and with the property that $(\gamma \cap U_\zeta) \subset \tilde{\gamma}.$
\end{lemma}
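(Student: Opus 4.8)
The goal is to connect all the points of $\gamma \cap U_\zeta$ (of which there are infinitely many, since $\gamma$ is a good half-boundary) to infinity by a single simple curve that stays inside $U$. The obstruction to simply using $\gamma$ itself is that $\gamma \subset \partial T$ may meet $J(f)$; in fact $J(f)$ may cross $\gamma$ at the components of the Julia set described earlier. The strategy is to build $\tilde\gamma$ by following $\gamma$ and, each time $\gamma$ is about to exit $U$ by crossing a ``piece'' of Julia set, detour around that piece while staying in $U$. First I would fix notation: write $\gamma$ as a simple curve parametrised toward infinity, with $\{x_j\}$ the points of $\gamma \cap U_\zeta$ listed in order along $\gamma$; these tend to $\infty$. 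The key point is to understand the connected components of $\gamma \setminus U$, i.e. the ``excursions'' of $\gamma$ outside $U$.

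\textbf{Key steps.} \emph{Step 1: locate where $\gamma$ leaves $U$.} Since each $x_j \in U$, which is open, a neighbourhood of $x_j$ on $\gamma$ lies in $U$. The parts of $\gamma$ outside $U$ lie in $J(f) = \partial U$ (as $\gamma \subset \partial T \subset \overline{\mathcal T}$ and $\overline{\mathcal T}\setminus \mathcal T$ is close to the Julia set; more precisely one uses that the boundary of a tract, away from preimages of $D$, is contained in $J(f)$). Each bounded component $I$ of $\gamma \setminus U$ is then a compact connected subset of $J(f)$; I would show that it is contained in a single ``piece'' $K$ of some element $X \in \mathcal X$, using Proposition~\ref{prop:Xdense} (density of $\mathcal X$ in $J(f)$) to approximate and then a connectedness/compactness argument, together with the fact that such an $I$, lying on $\partial T$ and being bounded, has a definite external address and does not escape to $\tau_1(\s)$ only in the ``bad'' case. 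Here one must separate the finitely many bad pieces (dealt with by ad hoc detours, or by noting they contribute only finitely many excursions near $D$) from the good pieces, for which Corollary~\ref{cor:hyp-julia} applies. \emph{Step 2: the excursions shrink.} Fix a base point $w_I \in I$ for each excursion; since the $x_j \to \infty$ and the excursions are interleaved with the $x_j$, the base points $w_I \to \infty$. By Corollary~\ref{cor:hyp-julia}, the good pieces $K_I \supset I$ tend uniformly to infinity; hence their Euclidean diameters, relative to their distance from the origin, are controlled — precisely, $\operatorname{dist}(0,K_I) \to \infty$ and $\sup_{z\in K_I}|z| / \operatorname{dist}(0,K_I) \to 1$, so the $K_I$ are eventually contained in thin annular regions. \emph{Step 3: construct the detours.} For each excursion, replace the corresponding sub-arc of $\gamma$ by a short arc in $U$ that goes ``round'' $K_I$: since $K_I$ is a compact connected subset of $\C$, its complement has one unbounded component, and because $K_I$ shrinks (in the relative sense of Step 2) while being pinned near $\gamma$, one can route a simple arc in $U$ from one side of $K_I$ to the other staying within a bounded hyperbolic (or just Euclidean relative) distance. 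One must check these detours can be chosen pairwise disjoint and disjoint from the retained portions of $\gamma$ — this is possible because, after discarding finitely many initial excursions, they live in disjoint bounded regions along $\gamma$. \emph{Step 4: assemble.} Concatenate the retained sub-arcs of $\gamma$ (which contain all but finitely many of the $x_j$) with the detour arcs to get a curve in $U$ from a finite point to $\infty$; handle the finitely many leftover points $x_j$ and the finitely many bad pieces by prepending finitely many further arcs in $U$ (possible since $U$ is open and connected); finally simplify to a simple curve. The conclusion $(\gamma \cap U_\zeta) \subset \tilde\gamma$ holds since we retained all the $x_j$ and explicitly adjoined the finitely many exceptional ones.

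\textbf{Main obstacle.} The crux is Step 1 together with Step 3: showing that every excursion of $\gamma$ out of $U$ is contained in a good piece (or one of finitely many bad pieces) so that the uniform bound of Proposition~\ref{prop:hyp-julia} / Corollary~\ref{cor:hyp-julia} is available, \emph{and} then using that bound to route a detour through $U$ without the detours colliding with each other or with $\gamma$. The uniform-to-infinity statement gives us that far out along $\gamma$ the obstructing Julia components are ``small'' compared to their distance from the origin, which is exactly what is needed to weave around them inside $U$; making this quantitative and ensuring disjointness of the countably many detours is the technical heart of the argument. The finitely many bad pieces and the finitely many points of $U_\zeta$ not on a good half-boundary are genuinely exceptional and are simply absorbed into a finite modification of the curve, using only that $U$ is open and connected.
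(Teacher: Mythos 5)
Your Step~1 contains a categorical error that unravels the whole strategy. By definition, a \emph{piece} is a bounded component of $X\setminus\partial\mathcal{T}$ for $X\in\mathcal{X}$; in particular every piece is disjoint from $\partial\mathcal{T}$. But the excursions of $\gamma$ out of $U$ are, as you observe, subsets of $\gamma\subset\partial T\subset\partial\mathcal{T}$. So an excursion $I\subset\partial\mathcal{T}$ cannot be ``contained in a single piece'' --- it lies exactly on the set from which pieces are excluded. Compounding this, your claim that $I$ ``has a definite external address'' is unjustified: external addresses are defined only for escaping points, and $J(f)$ (even for hyperbolic $f$) contains non-escaping points; Proposition~\ref{prop:Xdense} gives density of $\mathcal{X}$ in $J(f)$, not that connected subsets of $J(f)$ sit inside elements of $\mathcal{X}$. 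More fundamentally, the obstruction to joining $w_i$ to $w_{i+1}$ in $U$ is not the set $\gamma\setminus U$ but the continua of $J(f)$ that emanate from $\eta_i$ \emph{transversally} into $\mathcal{T}$ or into $\C\setminus\overline{\mathcal{T}}$ and block candidate curves there; these need not lie on $\gamma$ at all.

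The paper's argument avoids all of this by never touching $\gamma\setminus U$ directly. It introduces the variational quantity $\tau_i=\sup_{\beta}\min_{z\in\beta}|z|$ over simple curves $\beta\subset U$ from $w_i$ to $w_{i+1}$; if $\tau_i\to\infty$ the curve $\tilde\gamma$ is assembled trivially, so one assumes a bounded subsequence and derives a contradiction. The key step (Claim~\ref{a:claim}) shows $\tau_i\geq c_i:=\inf\{|z|:z\in\mathcal{K}_i\}$ once $\sigma_i>\tau_i$: given a near-optimal $\beta$, one looks at the bounded region $V$ between $\beta$ and $\eta_i$, shows via density of $\mathcal{X}$ and the Boundary Bumping Theorem that any cross-cut of $V$ on $\partial B(0,r')$ meeting $\C\setminus U$ would force a piece in $\mathcal{K}_i$ of modulus $<c_i$, and then improves $\beta$ through those cross-cuts to beat $\tau_i$. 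Only after that does Corollary~\ref{cor:hyp-julia} enter, to rule out a bounded subsequence of $\tau_{n_i}$. This sidesteps your Step~3 entirely: there is no need to route countably many pairwise-disjoint detours or to control a ratio like $\sup_{z\in K_I}|z|/\operatorname{dist}(0,K_I)$, which Corollary~\ref{cor:hyp-julia} does not in any case directly provide. You should rework the proof around a quantity like $\tau_i$, rather than trying to surgically modify $\gamma$ excursion by excursion.
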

\begin{proof}
Let $T$ be a tract, and let $\gamma\subset\partial T$ be a good half-boundary. Since $\gamma$ is a simple curve from a finite point to infinity, which by assumption contains infinitely many points of $U_\zeta$, we can label these points $w_1, w_2, \ldots$ in an obvious order.  For $i \in \N$, we let $\eta_i$ be the portion of $\gamma$ from $w_i$ to $w_{i+1}$, and we also let $\mathcal{K}_i$ denote the collection of all pieces whose closure meets $\eta_i$. It is easy to see that the closure of each element of $\mathcal{K}_i$ is a continuum in $J(f)$ that meets $\eta_i$.

As we discussed at the beginning of this section, our goal is to join $\zeta$ to infinity with a curve in $U$ which is ``close'' to $\gamma$ and contains all the points $w_1, w_2, \ldots$. (In the proof of Theorem~\ref{theo:finitelymanytracts} these curves were just the half-boundaries $\gamma_{i,j}$.) We meet a problem with this goal if there are components of $J(f)$ which meet $\gamma$ and somehow ``obstruct'' our attempts to construct the necessary curve. 

Set
\[
\tau_i = \sup \{ \min |z| : z \in \beta \}, \quad\text{for } i \in \N,
\]
where the supremum is taken over all simple curves $\beta \subset U$ that join $w_i$ to $w_{i+1}$. Roughly speaking, $\tau_i$ measures how small the modulus has to be at some point on a curve in $U$ that joins the two preimages $w_i$ and $w_{i+1}$.

Suppose that $\tau_i\rightarrow\infty$ as $i\rightarrow\infty$. Then we can construct the required curve $\tilde{\gamma}$ by taking a union of curves in $U$ that join all the points of $U_\zeta \cap \gamma$, two at a time, so that $\tilde{\gamma}$ accumulates only at infinity. It follows that we can assume, by way of contradiction, that there is a subsequence $(n_i)_{i\in\N}$ and $L > 0$ such that $\tau_{n_i} \leq L$, for $i \in \N$.

For each $i \in \N$, set $\sigma_i := \min\{|z| : z \in \eta_i\}$. 
We now claim the following.
\begin{claim}
\label{a:claim}
If $i \in \N$ is such that $\sigma_i > \tau_i$, then $\tau_i \geq c_i$, where $c_i := \inf \{ |z| : z \in \mathcal{K}_i \}.$
\end{claim}
Note that, in fact, it can be shown that $\tau_i = c_i$; however we do not need this, and so the proof is omitted.

Before proving Claim~\ref{a:claim}, we show how to complete the proof of Lemma~\ref{lemm:mainclaim}. Since we have assumed that $\tau_{n_i} \leq L$, for $i \in \N$, and since $\sigma_i \to \infty$ as $i \to \infty$, the condition of Claim~\ref{a:claim} is satisfied for infinitely many values of $i$. It follows that for infinitely many $i \in\N$ there is a component $K_i$ of $\mathcal{K}_i$ that contains a point of modulus less than $L+1$. By the comments earlier, the closure of $K_i$ meets $\eta_i$, and so $K_i$ contains a point $z_i$ with $z_i \rightarrow\infty$ as $i \rightarrow\infty$. We show that this gives a contradiction. Suppose first that for infinitely many values of $i$, $K_i$ is a bad piece. By definition, $K_i$ is contained in a fundamental domain $F_i$ that meets $\eta_i$. Since only finitely many fundamental domains can meet a compact set, this is a contradiction. We can assume, therefore, that $K_i$ is a good piece for all sufficiently large values of $i$. This is now an immediate contradiction to Corollary~\ref{cor:hyp-julia}.

It remains, then, to prove Claim~\ref{a:claim}. Suppose, contrary to the claim, that $\tau_i < c_i$. Note that $\tau_i > 0$, since $w_i$ and $w_{i+1}$ both lie in $U$, and $U$ is open. Choose $r \in (0, \tau_i)$. Then there is a simple curve $\beta \subset U$, joining $w_i$ and $w_{i+1}$, such that $\beta$ does not meet $B(0, r)$. Let $V$ denote the union of the bounded components of the complement of $\beta \cup \eta_i$. Clearly $V$ is open, but need not be connected. Each component of $V$ is a Jordan domain, the boundary of which meets $\eta_i$. 

Choose $r' \in (\tau_i, \min\{c_i, \sigma_i\})$. Note that $\beta$ contains a point of modulus at most $\tau_i$, and also a point of modulus greater than $r'$. Hence $\beta$ meets $\partial B(0, r')$ and so $V$ also meets $\partial B(0, r')$. 

\begin{figure}
	\includegraphics[width=16cm,height=10cm]{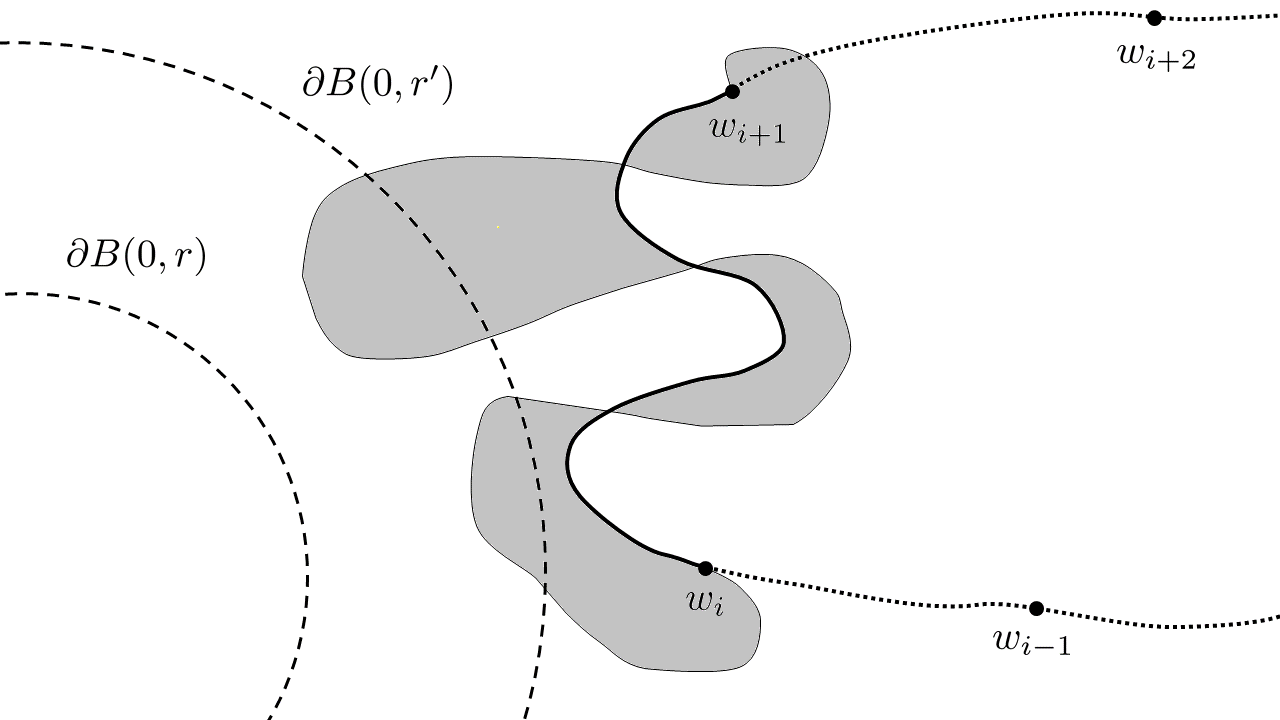}
  \caption{An illustration of the construction in the proof of Lemma~\ref{lemm:mainclaim}. The curve $\eta_i$ is shown black, and the boundary of the rest of the tract is black and dotted. The curve $\beta$ is the thin
	curve from $w_i$ to $w_{i+1}$. The components of $V$ are shown in grey.\label{fig:graphic}}
\end{figure}

Our goal now is to construct a simple curve $\beta' \subset U$, joining $w_i$ and $w_{i+1}$, such that $\beta'$ does not meet $B(0, r')$. This then gives a contradiction, since $r' > \tau_i$.

Each component of $\partial B(0, r') \cap V$ is a cross-cut of $V$, i.e. an open arc lying in $V$ whose closure is an arc with exactly two endpoints in $\partial V$. We claim that each such cross-cut lies in $U$. For, suppose that $T$ is a component of $\partial B(0, r') \cap V$ that meets $\C\setminus U$. Let $V'$ be the component of $V$ containing $T$. Since $\beta \subset U \subset F(f)$, there is a point $\xi \in V' \cap J(f)$ with $|\xi| < c_i$. By Proposition~\ref{prop:Xdense} there exists $X \in \mathcal{X}$ and a point $\xi' \in V' \cap X$ with $|\xi'| < c_i$.

Since $V'$ is bounded, $X$ must meet $\partial V'$. However, $X$ cannot meet $\beta$, because $\beta \subset F(f)$. Hence $X$ meets $\eta_i \subset \partial\mathcal{T}$. We apply Theorem~\ref{nad} with $X' = X \cup \{\infty\}$, with $E = X' \setminus \partial \mathcal{T}$, and with $K$ being the component of $E$ that contains $\xi'$. We deduce that $\overline{K}$ meets $\overline{X' \cap \partial\mathcal{T}}$. It then follows easily that $\xi' \in K \in \mathcal{K}_i$. This is a contradiction since $|\xi'|  < c_i$.

The curve $\beta'$ mentioned earlier can now be constructed by taking a suitable union of components of $\partial B(0, r') \cap V$ with pieces of $\beta$.
\end{proof}

\subsection{Completing the proof}
Finally, we are now able to prove the main result. As mentioned earlier, our approach here is very similar to the proof of Theorem~\ref{theo:finitelymanytracts}, and so we highlight only the important differences.
\begin{proof}[Proof of Theorem~\ref{theo:hyperbolic}]
As in the proof of Theorem~\ref{theo:finitelymanytracts}, fix a point $\zeta \in U$. First suppose that $T_1$ and $T_2$ are tracts (possibly equal), and also that $\gamma_1 \subset \partial T_1$ and $\gamma_2 \subset \partial T_2$ are good half-boundaries that are adjacent (in the sense given in the proof of Theorem~\ref{theo:finitelymanytracts}). Suppose that $\tilde{\gamma}_1$ and $\tilde{\gamma}_2$ are the unbounded simple curves from Lemma~\ref{lemm:mainclaim} (with the obvious notation). For each $j \in \{1, 2\}$ we let $\tilde{\gamma}_j'$ be the simple curve obtained by appending to $\tilde{\gamma}_j$ a curve in $U$ ending at $\zeta$. We claim that $\tilde{\gamma}_1'$ and $\tilde{\gamma}_2'$ are in the same access to infinity from $U$. 

For, consider the components of $W = \hat{\C} \setminus (\tilde{\gamma}_1' \cup \tilde{\gamma}_2')$. Since $U$ is simply connected, and $(\tilde{\gamma}_1' \cup \tilde{\gamma}_2') \subset U$, no bounded component of $W$ can meet $\partial U$. It follows, by Lemma~\ref{lem:accesses-tracts}, that we can assume that $\tilde{\gamma}_1' \cap \tilde{\gamma}_2' = \{\zeta,\infty\}$, and hence that $W$ has exactly two components, each of which is unbounded. Note that, by construction, one component of $W$, say $W'$, has the property that all components of $W' \cap \mathcal{T}$ are bounded.
Suppose that $W' \cap \partial U \neq \emptyset$. Then $W'$ meets $J(f)$. By Proposition~\ref{prop:Xdense}, there exists $X \in \mathcal{X}$ that meets $W'$. We know that $X \subset \tau_N(\s)$ for some $N \in \N$ and an external address $\s$, and so $X \cap \tau_1(\s)$ has an unbounded component. This is a contradiction as $\partial W' \subset U \subset F(f)$, and all components of $W' \cap \mathcal{T}$ are bounded. It then follows, by Lemma~\ref{lem:accesses-tracts}, that $\tilde{\gamma}_1'$ and $\tilde{\gamma}_2'$ are in the same access to infinity for $U$, as claimed.

The remainder of the proof follows as in the proof of Theorem~\ref{theo:finitelymanytracts}, replacing the half-boundaries of that proof (which lie in $U$) with the curves that result from Lemma~\ref{lemm:mainclaim}, and noting that at most finitely elements of $f^{-1}(\zeta) \cap U$ do not lie on a good half-boundary.
\end{proof}
%
%
%
\bibliographystyle{alpha}
\bibliography{ref}
\end{document}